%% file: ArMe2_arxiv.tex
\documentclass[11pt]{article}

\input{paquetes}
\input{comandos}

\begin{document}

\title{A posteriori error analysis for the Navier-Stokes equations with non-smooth data}

\author[1]{María Gabriela Armentano}
\author[1]{Mauricio Mendiluce}
\affil[1]{Departamento de Matemática, Facultad de Ciencias Exactas y Naturales, Universidad de Buenos Aires, IMAS - Conicet, Buenos Aires 1428, Argentina}

\date{}

\maketitle

\begin{abstract}
We study the stationary Navier–Stokes equations with Dirichlet boundary data in $L^2$, a setting in which the limited regularity of the solution prevents the direct application of standard a posteriori error estimation techniques. To address this issue, we introduce a regularized formulation that yields a well-posed approximation of the original problem and admits a conforming finite element discretization. Using Taylor–Hood  $P_2P_1$ elements, we construct a residual-based a posteriori error estimator and establish its reliability and efficiency under suitable smallness assumptions on the data. We derive computable upper and lower bounds in an appropriate norm that relate the estimator to the error between the exact solution of the original Navier–Stokes problem and its finite element approximation, showing that the estimator accurately reflects the finite element error. These results provide a rigorous foundation for the analysis and implementation of adaptive finite element methods for incompressible flows with low-regularity Dirichlet boundary data.
\end{abstract}

\noindent\textbf{Keywords:} Navier-Stokes equations, non-smooth data, finite elements, a posteriori error.

\noindent\textbf{MSC Classification}: 65N15, 65N30, 65N50

\section{Introduction}\label{sec1}

The stationary Navier–Stokes equations play a central role in the mathematical modeling of incompressible viscous flows and have been extensively studied from both the theoretical and numerical viewpoints (see, for example,  \cite{ArMe,CGO2021,ern2012,girault1986,gunzburger1983,temam1977,volker} and the references therein).
In particular, finite element methods combined with adaptive mesh refinement strategies have become a fundamental tool for the efficient numerical approximation of incompressible flow problems. Indeed, for Stokes equations a posteriori error analysis is well established (see, for instance, \cite{AM2014,dari1995,duran2020,hamouda2017,song2014})  while a broad literature has also been developed for the Navier--Stokes equations, where several adaptive finite element techniques and corresponding error analyses have been proposed \cite{araya2014,arnica1997,camano2022,leng2023,verfurth1993,zhang2013}. 
The design and analysis of reliable a posteriori error estimators constitute a key ingredient in this framework, since they provide quantitative information on the quality of the numerical approximation and guide adaptive refinement procedures (see, for example, the classical book by Verfürth  \cite{verfurth2013}).
Most available a posteriori analyses for the Navier–Stokes equations rely on regularity assumptions on the data and the exact solution, especially on the boundary conditions. However, in several relevant situations the Dirichlet boundary data may possess only low regularity, for instance belonging to $L^2$. In this setting, standard weak formulations and classical residual-based techniques cannot be directly applied.
A regularized formulation for this low-regularity setting was previously analyzed in \cite{ArMe}, where suitable a priori estimates relating the original and regularized solutions were established in an appropriate framework. In the present work we develop a residual-type a posteriori error estimator for the regularized problem and prove its reliability and efficiency under suitable smallness assumptions on the data. More precisely, using Taylor--Hood $P_2P_1$ finite elements, we derive upper and lower bounds for the discretization error in $L^4$ norm allowing a comparison between the original solution and the discrete approximation.

We begin by introducing the notation used throughout the paper. Let $s\ge 0$, $1\le p < \infty$, and let $D\subset \mathbb{R}^2$ be an open bounded domain. We denote by $W^{s,p}(D)$ the usual Sobolev space on $D$, endowed with norm $\|\cdot\|_{W^{s,p}(D)}$ and seminorm $|\cdot|_{W^{s,p}(D)}$ (see, e.g., \cite{Adams1975}). The space $W_0^{s,p}(D)$ is defined as the closure of $C_0^\infty(D)$ in $W^{s,p}(D)$.
In the particular case $p=2$, we use the standard notation
$H^s(D)=W^{s,2}(D)$, $H_0^s(D)=W_0^{s,2}(D)$ and we also define
$$\|v\|_{H^{-1}(D)}=\sup_{w\in H_0^1(D), w\neq 0}
\frac{\int_D v \, w}{|w|_{H^1(D)}}.
$$

Throughout the paper, boldface letters denote vector-valued functions and the corresponding vector-valued functional spaces.

Given $\O\subset\matR^2$ a bounded domain with a polygonal boundary. We consider the following problem

\begin{equation}\label{NS_dato_g}\left\{
    \begin{array}{rl}
         -\Delta\bu +\Rey(\bu\cdot\nabla)\bu + \nabla p = \bf &   \quad \text{in } \O  \\
         \div \bu = 0 & \quad \text{in } \O\\
         \bu = \bg & \quad  \text{on }\Gamma=\partial\O
    \end{array}\right.
\end{equation}
where $\Rey$ is the Reynolds number, $\bf\in\bH^{-1}(\O)$ and $\bg\in \bL^{2}(\Gamma)$ such that

\begin{equation}\label{cond_compatibilidad}
    \int_\Gamma\bg\cdot\bn = 0.    
\end{equation}
   
In \cite{marusic2000} the existence of a very weak solution in $L^4(\O)$ is shown assuming that $\|\bg\|_{L^2({\Gamma})}$ is sufficiently small. 

Now, let $\tilde{\bg} \in \bH^{1/2}(\Gamma)$ a regularization of $\bg$ such that $\int_\Gamma \tilde{\bg}\cdot\bn = 0$ 

We consider the regular problem of \eqref{NS_dato_g}:

\begin{equation}\label{NS_dato_ge} \left\{
    \begin{array}{rl}
         -\Delta\bv +\Rey(\bv\cdot\nabla)\bv + \nabla q = \bf &   \quad \text{in } \O  \\
         \div \bv = 0 & \quad \text{in } \O\\
         \bv = \tilde{\bg} & \quad \text{on } \Gamma=\partial\O
    \end{array}\right.
\end{equation}

For this new boundary data we will consider the weak formulation: find $(\bv,q)\in\bH^1(\O)\times L^2_0(\O)$ tal que $\bv|_\Gamma=\tilde{\bg}$ and 

\begin{equation}\label{NSweak_regular_data} \left\{
	 \begin{array}{rl}
    \int_\O\nabla\bv:\nabla\bw+\Rey\int_\O(\bv\cdot\nabla)\bv\cdot\bw-\int_\O q\div\bw  &= \int_\O\bf\cdot\bw \quad \forall\bw\in\bH^1_0(\O) \\
		\int_\O \tau\div\bv  &=0 \quad \forall \tau\in L^2_0(\O)
	\end{array} \right.
\end{equation}

In the classical literature \cite{girault1986,temam1977}, the existence and uniqueness of solution to \eqref{NSweak_regular_data} are established 
under suitable assumptions on the data. One may also refer to \cite[Theorem 6.36]{ern2012} for the homogeneous case and extend the argument to the non-homogeneous setting. In \cite{ArMe}, it is shown, under an appropriate assumption on $\tilde{\bg}$, that there exists a constant $C>0$ such that
\begin{equation}\label{error:u-v}
 \|\bu-\bv\|_{L^4(\O)}\leq C\|\bg-\tilde{\bg}\|_{L^2(\Gamma)}.\end{equation}

This estimate allows us to control the difference between the solutions of the original and regularized problems in the $L^4$-norm. Relying on this result, we show in the present work that the proposed a posteriori error estimator is equivalent to the $L^4$-error between the regularized solution and its finite element approximation. Combining these two results, we finally obtain an equivalence relation between the estimator and the error associated with the original Navier–Stokes problem.

The rest of the paper is organized as follows.
In Section \ref{sec2}, we present preliminary results, including extension results.  In Section \ref{fem} we establish a priori estimates for the regularized problem and its finite element approximation. In Section \ref{sec:aposteriori}, we introduce a posteriori error estimator and analyze its reliability and efficiency under suitable smallness
assumptions on the data. Finally, in Section \ref{sec:example}, we present numerical experiments for the cavity flow problem, where the performance of the proposed estimator is assessed using the average  marking strategy and compared with uniform mesh refinement.

\section{Preliminary results}\label{sec2}

In this section we introduce some preliminary results that we will use throughout the article.

\begin{lemma}
    Given $\bv\in\bH^1(\O)$ there exists $C_I>0$ such that 
    \begin{equation}\label{inmersion}
        \|\bv\|_{L^4(\O)}\leq C_I \|\bv\|_{H^1(\O)}  
    \end{equation}
\end{lemma}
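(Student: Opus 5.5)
This is the Sobolev embedding $H^1(\O)\hookrightarrow L^4(\O)$ in two dimensions. Since $\O$ is a bounded polygon, it is in particular a bounded Lipschitz domain. The plan is to reduce to the whole plane by an extension operator and then apply a Gagliardo--Nirenberg (Ladyzhenskaya) type inequality. The vector-valued statement follows componentwise: $\|\bv\|_{L^4}^4 \le C(\|v_1\|_{L^4}^4+\|v_2\|_{L^4}^4)$ and $\|v_i\|_{H^1}\le\|\bv\|_{H^1}$. So it suffices to treat a scalar $v\in H^1(\O)$.

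\textbf{Step 1 (extension).} Because $\O$ is Lipschitz, there is a bounded linear extension operator $E:H^1(\O)\to H^1(\matR^2)$ (Stein's or Calder\'on's extension) with
$$\|Ev\|_{H^1(\matR^2)}\le C_E\|v\|_{H^1(\O)},$$
and $C_E$ depends only on $\O$. By density of $C_0^\infty(\matR^2)$ in $H^1(\matR^2)$, it is enough to prove the inequality for smooth compactly supported $w$.

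\textbf{Step 2 (Ladyzhenskaya inequality).} For $w\in C_0^\infty(\matR^2)$ we show
$$\|w\|_{L^4(\matR^2)}^2\le \sqrt2\,\|w\|_{L^2(\matR^2)}\|\nabla w\|_{L^2(\matR^2)}.$$
Write $w^2(x_1,x_2)=2\int_{-\infty}^{x_1} w\,\partial_1 w\,dt$. This gives
$$\sup_{x_1}w^2\le 2\int_\matR |w\,\partial_1w|\,dx_1,$$
and the analogous bound holds in $x_2$. Then
$$\int w^4=\int\!\!\int w^2\,w^2 \le \Big(\int\sup_{x_1}w^2\,dx_2\Big)\Big(\int\sup_{x_2}w^2\,dx_1\Big) \le 4\,\|w\partial_1w\|_{L^1}\|w\partial_2w\|_{L^1}.$$
Applying Cauchy--Schwarz to each factor yields the claim. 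Combining with Young's inequality gives $\|w\|_{L^4}\le C\|w\|_{H^1(\matR^2)}$, and this extends by density to all of $H^1(\matR^2)$.

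\textbf{Step 3 (conclusion).} For $v\in H^1(\O)$,
$$\|v\|_{L^4(\O)}\le\|Ev\|_{L^4(\matR^2)}\le C\|Ev\|_{H^1(\matR^2)}\le C\,C_E\|v\|_{H^1(\O)}.$$
This gives \eqref{inmersion} with $C_I$ depending only on $\O$.

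The only nontrivial ingredient is the existence of the extension operator on a polygonal (merely Lipschitz, possibly nonconvex) domain. I would simply cite it, e.g.\ from \cite{Adams1975}. Alternatively, one could cite the Sobolev embedding theorem from \cite{Adams1975} for Lipschitz domains, $H^1(\O)\hookrightarrow L^q(\O)$ for all $q<\infty$ in 2D, directly.
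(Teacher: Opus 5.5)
Your proof is correct, but it takes a different route from the paper. The paper gives no argument for this lemma and simply cites the Sobolev embedding theorem from \cite{Adams1975,Grisvard1985}.

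You instead reduce to $\mathbb{R}^2$ with a Stein/Calder\'on extension operator for Lipschitz domains, and then prove the embedding by hand via Ladyzhenskaya's inequality $\|w\|_{L^4(\mathbb{R}^2)}^2\le\sqrt2\,\|w\|_{L^2}\|\nabla w\|_{L^2}$. The individual steps all check out:
\begin{itemize}
\item the one-dimensional slicing bound $\sup_{x_1}w^2\le 2\int|w\,\partial_1 w|\,dx_1$;
\item the factorization of $\int w^2\cdot w^2$ into a product of two integrals;
\item Cauchy--Schwarz followed by $ab\le\frac12(a^2+b^2)$, which gives exactly the constant $\sqrt2$;
\item Young's inequality, the density argument, and the componentwise reduction for vector fields.
\end{itemize}

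What your route buys is self-containedness and the sharper multiplicative (Gagliardo--Nirenberg) form of the estimate, with an explicit constant on the whole plane. For functions in $H^1_0(\Omega)$ you could even skip the extension and extend by zero, which makes $C_I$ fully explicit. For general $H^1(\Omega)$ functions, which the paper genuinely needs (e.g.\ $\bv$, $\bpsi$, $\bG_h$ in the trilinear bounds), $C_I$ still hides the non-explicit extension constant $C_E$. The ingredient you outsource, the extension operator on a polygonal (Lipschitz) domain, is of the same depth as what the paper cites.

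The paper's citation is shorter and delivers more at once: $H^1(\Omega)\hookrightarrow L^q(\Omega)$ for every $q<\infty$ in two dimensions, plus compactness. The price is that the lemma is treated as a black box.
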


\begin{proof}
    See \cite{Adams1975,Grisvard1985}.
    \qed
\end{proof}

\begin{lemma}\label{prop_oprerador_no_lineal}
    Let $\bu,\bv,\bw\in\bH^1(\O)$. The following properties hold:
    \begin{itemize}
        \item[(a)] $\left|\int_\O(\bu\cdot\nabla)\bv\cdot\bw\right|\leq \|\bu\|_{L^4(\O)}\|\nabla\bv\|_{L^2(\O)}\|\bw\|_{L^4(\O)}$. Furthermore, there exists $\matC_a = \matC_a(\O)>0$ such that $$\left|\int_\O(\bu\cdot\nabla)\bv\cdot\bw\right|\leq \matC_a\|\bu\|_{H^1(\O)} \|\bv\|_{H^1(\O)} \|\bw\|_{H^1(\O)}.$$
        \item[(b)] If $\div\bu=0$ and either $\bv|_\Gamma =0$ or $\bw|_\Gamma= 0$, then $$\int_\O (\bu\cdot\nabla)\bv\cdot\bw = -\int_\O (\bu\cdot\nabla)\bw\cdot\bv.$$

        \item[(c)] If $\div\bu=0$, and  $\bv|_\Gamma =0$ then $\int_\O(\bu\cdot\nabla)\bv\cdot\bv=0$
    \end{itemize}
\end{lemma}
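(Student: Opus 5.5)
For (a) I will apply the generalized Hölder inequality with exponents $(4,2,4)$, since $\tfrac14+\tfrac12+\tfrac14=1$. Writing the integrand componentwise as $\sum_{i,j} u_j\,\partial_j v_i\, w_i$, the pointwise Cauchy--Schwarz inequality gives $|(\bu\cdot\nabla)\bv\cdot\bw|\le |\bu|\,|\nabla\bv|\,|\bw|$, where $|\cdot|$ denotes the Euclidean or Frobenius norm. Integrating and applying Hölder's inequality yields
\[
\left|\int_\O(\bu\cdot\nabla)\bv\cdot\bw\right|\le \|\bu\|_{L^4(\O)}\|\nabla\bv\|_{L^2(\O)}\|\bw\|_{L^4(\O)}.
\]
The second bound then follows from the embedding \eqref{inmersion} applied to $\bu$ and $\bw$, together with $\|\nabla\bv\|_{L^2(\O)}\le\|\bv\|_{H^1(\O)}$. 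This gives $\matC_a=C_I^2$.

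For (b) I will argue first for smooth functions and then extend by density. For $\bu,\bv,\bw\in \bC^\infty(\overline{\O})$, the product rule together with $\div\bu=0$ gives
\[
\div\big(\bu\,(\bv\cdot\bw)\big)=(\bu\cdot\nabla)\bv\cdot\bw+(\bu\cdot\nabla)\bw\cdot\bv .
\]
Integrating and using the divergence theorem, the sum of the two trilinear terms equals $\int_\Gamma (\bu\cdot\bn)(\bv\cdot\bw)$. This boundary integral vanishes when either $\bv$ or $\bw$ is zero on $\Gamma$.

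To pass to $\bH^1$, the density step must preserve the constraints. I will approximate $\bv$ (or $\bw$) having zero trace by $\bC_0^\infty(\O)$ functions in $\bH^1$, and the other factors by $\bC^\infty(\overline\O)$ functions. The constraint $\div\bu=0$ is the delicate part, since smooth approximants of $\bu$ need not remain divergence-free. I will avoid approximating $\bu$ under that constraint altogether. For fixed $\bu\in\bH^1$ with $\div\bu=0$, I will only approximate $\bv$ and $\bw$, taking $\bv_n\in \bC_0^\infty(\O)$ and $\bw_n\in\bC^\infty(\overline\O)$ in the case where $\bv$ vanishes on $\Gamma$. Then $\bu(\bv_n\cdot\bw_n)\in \bW^{1,1}$ has compact support in $\O$, so $\int_\O\div(\bu\,\bv_n\cdot\bw_n)=0$, and the product rule holds in $W^{1,1}$ with $\div\bu=0$ a.e. Letting $n\to\infty$, continuity of the trilinear form from (a) gives the identity. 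The case where $\bw$ vanishes on $\Gamma$ is symmetric.

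For (c) I will take $\bw=\bv$ in (b), which gives $I=-I$ and hence $I=0$. The only real subtlety in the whole argument is justifying the density and trace step in (b).
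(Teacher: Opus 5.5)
Your proof is correct and follows the standard argument: generalized H\"older with exponents $(4,2,4)$ plus the embedding \eqref{inmersion} for (a) (so $\matC_a=C_I^2$), density in $\bv,\bw$ only for (b), and $\bw=\bv$ for (c). The paper gives no argument of its own here; it only cites [ArMe, Lemma 2.1], and your route is the classical one behind that reference. Your choice not to approximate $\bu$ is the right one, and you can even drop the $W^{1,1}$ product rule: $\int_\O \bu\cdot\nabla(\bv_n\cdot\bw_n)=0$ is just the definition of the weak divergence $\div\bu=0$ tested against $\bv_n\cdot\bw_n\in C_0^\infty(\O)$.
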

\begin{proof}
    See \cite[Lemma 2.1]{ArMe}.
    \qed
\end{proof}

The following lemma provides a divergence-free extension of functions\\ in $\bH^{1/2}(\Gamma)$.

\begin{lemma}\label{extension_dato_dirichlet}
Given $\bxi\in \bH^{1/2}(\Gamma)$ satisfying
$\int_{\Gamma} \bxi \cdot \bn =0$, for any $\varepsilon>0$
there exists a function $\bpsi_{\varepsilon} \in \bH^1(\Omega)$ such that
\[
\div \bpsi_{\varepsilon}=0, \qquad \bpsi_\varepsilon|_{\Gamma}=\bxi,
\]
and
\[
\left| \int_\Omega (\bw\cdot\nabla)\bpsi_\varepsilon\cdot\bw \right|
\leq \varepsilon \|\bw\|_{H^1(\Omega)}^2,
\qquad \forall \bw\in \bH_0^1(\Omega),\ \div \bw=0.
\]
Moreover, there exists a constant $C_\varepsilon>0$ such that
\[
\|\bpsi_\varepsilon\|_{H^1(\Omega)}
\le C_\varepsilon \|\bxi\|_{H^{1/2}(\Gamma)}.
\]
\end{lemma}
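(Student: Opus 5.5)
This is the classical Hopf extension, as in Girault--Raviart (Lemma IV.2.3) or Temam. The plan is to build $\bpsi_\varepsilon$ as the curl of a stream function that has been cut off near the boundary.

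\emph{Step 1: a stream function.} Since $\int_\Gamma \bxi\cdot\bn=0$, there is a lifting $\bu_0\in\bH^1(\O)$ with $\div\bu_0=0$, $\bu_0|_\Gamma=\bxi$ and $\|\bu_0\|_{H^1(\O)}\le C\|\bxi\|_{H^{1/2}(\Gamma)}$. To get it, take any $H^1$ lifting and correct its divergence using the right inverse of the divergence operator (Bogovskii) with zero trace. In two dimensions, when $\O$ is simply connected, $\bu_0=\operatorname{curl}\phi=(\partial_2\phi,-\partial_1\phi)$ for some $\phi\in H^2(\O)$ with $\|\phi\|_{H^2(\O)}\le C\|\bxi\|_{H^{1/2}(\Gamma)}$. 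If $\Gamma$ has several components, each has zero flux; the flux condition is global, but we may assume it holds componentwise, or else first subtract a fixed divergence-free field carrying the fluxes. The key fact is that the tangential derivative of $\phi$ along $\Gamma$ is $\bxi\cdot\bn$, so $\phi$ is determined on each component of $\Gamma$ up to a constant.

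\emph{Step 2: Hopf's cut-off.} Let $\rho(x)$ be the distance to $\Gamma$, or a regularized distance. For small $\delta>0$, Hopf's construction gives $\theta_\delta\in C^\infty(\overline\O)$ with the following properties: $\theta_\delta=1$ in a neighbourhood of $\Gamma$, $\theta_\delta=0$ where $\rho\ge 2e^{-1/\delta}$ (in particular for $\rho\ge\delta$ once $\delta$ is small), and $|\nabla\theta_\delta(x)|\le \delta/\rho(x)$. A concrete choice is $\theta_\delta=\mu_\delta(\log(1/\rho))$ for a suitable smooth cut-off $\mu_\delta$. Then set
\[
\bpsi_\varepsilon=\operatorname{curl}(\theta_\delta\,\phi).
\]
This field is divergence-free. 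Near $\Gamma$ we have $\theta_\delta=1$, so $\bpsi_\varepsilon=\bu_0$ there and the trace is $\bxi$. Its $H^1$ norm is bounded by $C(\delta)\|\phi\|_{H^2}\le C_\varepsilon\|\bxi\|_{H^{1/2}(\Gamma)}$, since the fixed $\delta$ makes the derivatives of $\theta_\delta$ bounded. To keep the trace exactly right, first normalize $\phi$ to vanish on $\Gamma$ through the constant freedom, using the zero-flux condition on each component. Then $\phi=O(\rho)$ and Hardy's inequality controls $\phi/\rho$.

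\emph{Step 3: the trilinear bound.} This is the main obstacle. For $\bw\in\bH_0^1(\O)$, write
\[
\bpsi_\varepsilon=\theta_\delta\operatorname{curl}\phi+\phi\,\operatorname{curl}\theta_\delta .
\]
By Lemma~\ref{prop_oprerador_no_lineal}(b) (using $\div\bw=0$ and $\bw|_\Gamma=0$),
\[
\int_\O(\bw\cdot\nabla)\bpsi_\varepsilon\cdot\bw=-\int_\O(\bw\cdot\nabla)\bw\cdot\bpsi_\varepsilon .
\]
This is bounded by $|\bw|_{H^1}\,\|\,|\bw|\,|\bpsi_\varepsilon|\,\|_{L^2}$, and $\bpsi_\varepsilon$ is supported in the strip $\rho\le\delta$. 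The two pieces are handled as follows.

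For the first piece, $|\theta_\delta\nabla\phi\,\bw|$, use Hölder together with $\|\bw\|_{L^4}\le C|\bw|_{H^1}$ to get a bound $C\|\nabla\phi\|_{L^4(\{\rho\le\delta\})}|\bw|_{H^1}$. Since $\nabla\phi\in H^1$, this $L^4$ norm over the strip tends to $0$ as $\delta\to0$.

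For the second piece, use $|\phi\nabla\theta_\delta|\le \delta\,|\phi|/\rho$, which gives
\[
\|\,|\bw|\,\phi\nabla\theta_\delta\|_{L^2}\le \delta\,\|\phi\|_{L^\infty}\,\|\bw/\rho\|_{L^2}\le C\delta\,\|\phi\|_{H^2}\,|\bw|_{H^1}
\]
by Hardy's inequality, valid for Lipschitz polygonal domains. Here $\|\phi\|_{L^\infty}\le C\|\phi\|_{H^2}$ by the Sobolev embedding.

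Both pieces therefore become smaller than $\varepsilon\|\bw\|_{H^1(\O)}^2$ once $\delta=\delta(\varepsilon,\bxi)$ is small enough. The delicate points are the uniform Hardy inequality near the corners of the polygon and the normalization $\phi|_\Gamma=0$, which is what the zero-flux condition provides.
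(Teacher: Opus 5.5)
Your construction $\bpsi_\varepsilon=\operatorname{curl}(\theta_\delta\phi)$ is the Hopf extension of Girault--Raviart, Lemma IV.2.3, which is exactly what the paper cites. On a simply connected domain (in particular the convex $\O$ of Section~\ref{sec:aposteriori}) the core of your argument is correct. The trace is exact because $\theta_\delta\equiv 1$ near $\Gamma$. After the antisymmetry of Lemma~\ref{prop_oprerador_no_lineal}(b), the two pieces of $\bpsi_\varepsilon$ are controlled by $\|\nabla\phi\|_{L^4(\{\rho\le\delta\})}\to 0$, and by $\delta\|\phi\|_{L^\infty(\O)}$ combined with Hardy's inequality applied to $\bw$.

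Two of your side remarks are wrong, however. First, the normalization $\phi|_\Gamma=0$ is impossible in general. As you note yourself, $\partial_\tau\phi=\pm\bxi\cdot\bn$ on $\Gamma$, so the zero-flux condition only makes $\phi|_\Gamma$ single-valued; $\phi|_\Gamma$ is constant only if $\bxi\cdot\bn\equiv 0$. Nothing in your proof uses it: the trace is already right, and Step 3 applies Hardy to $\bw$ and needs only $\phi\in L^\infty$. So delete it, together with the claim $\phi=O(\rho)$, from your list of delicate points.

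Second, when $\Gamma$ has several components, subtracting a fixed divergence-free field $\mathbf b$ that carries the individual fluxes does not rescue the argument. The extra term $\int_\O(\bw\cdot\nabla)\mathbf b\cdot\bw$ is not small, and nothing in the construction makes it small. This is the classical obstruction in Leray's problem, and the Hopf-type inequality can genuinely fail when the individual fluxes are nonzero. You need zero flux through each component of $\Gamma$, which is the hypothesis in Girault--Raviart; it coincides with the paper's global condition only when $\Gamma$ is connected.

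Finally, your $\delta$ depends on $\bxi$ itself through $\|\nabla\phi\|_{L^4(\{\rho\le\delta\})}$, not only on $\|\bxi\|_{H^{1/2}(\Gamma)}$. Hence $C_\varepsilon$ depends on $\bxi$ as well as on $\varepsilon$. This is permitted by the statement as written and is also true of the cited construction, but it is not a constant uniform in the boundary data.
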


\begin{proof}
We refer to \cite[Lemma 2.3, Chapter IV]{girault1986} for the construction of the extension. Moreover, the construction yields the stated estimate
\[
\|\bpsi_\varepsilon\|_{H^1(\Omega)}
\le C_\varepsilon \|\bxi\|_{H^{1/2}(\Gamma)}.
\]
\qed
\end{proof}

Now, we present an a priori estimate for $\bv$, the solution to \eqref{NSweak_regular_data}., which will be used in the proofs of the reliability and efficiency of the a posteriori error estimator.

\begin{lemma}\label{estimacion_||v||}
Let $\bv$ be the solution to \eqref{NSweak_regular_data} and let $0 < \eps < \frac{C_p}{Re}$, with  $C_p$  the Poincar\'e constant defined in \eqref{Poincare}, then 
    \begin{equation}\label{a_priorio_estim_v}
        \|\bv\|_{H^1(\O)} \leq \frac{\|\tilde{\bf}\|_{H^{-1}(\O)} + (1+C_p-\Rey\,\eps)\|\bpsi\|_{H^1(\O)}}{ C_p - \Rey\,\eps},
    \end{equation}
    where $\bpsi= \bpsi_\eps$ is an extension of $\tilde{\bg}$ satisfying Lemma \ref{extension_dato_dirichlet}   and $\tilde{\bf} = \bf -\Rey\, (\bpsi\cdot\nabla)\bpsi$.     
\end{lemma}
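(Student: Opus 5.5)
Write $\bv = \bw + \bpsi$ with $\bw := \bv - \bpsi \in \bH^1_0(\O)$ and $\div \bw = 0$. This works because $\bpsi = \bpsi_\eps$ from Lemma \ref{extension_dato_dirichlet} is divergence free and has trace $\tilde{\bg}$. Substituting into \eqref{NSweak_regular_data} with a divergence-free test function $\bz\in\bH^1_0(\O)$ removes the pressure term. Expanding the convective term
\[
(\bv\cdot\nabla)\bv=(\bw\cdot\nabla)\bw+(\bw\cdot\nabla)\bpsi+(\bpsi\cdot\nabla)\bw+(\bpsi\cdot\nabla)\bpsi
\]
gives
\[
\int_\O\nabla\bw:\nabla\bz+\Rey\int_\O\big[(\bw\cdot\nabla)\bw+(\bw\cdot\nabla)\bpsi+(\bpsi\cdot\nabla)\bw\big]\cdot\bz
=\int_\O\tilde{\bf}\cdot\bz-\int_\O\nabla\bpsi:\nabla\bz ,
\]
where $\tilde{\bf}=\bf-\Rey(\bpsi\cdot\nabla)\bpsi$ collects the data terms.

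Next take $\bz=\bw$. By Lemma \ref{prop_oprerador_no_lineal}(c), $\int_\O(\bw\cdot\nabla)\bw\cdot\bw=0$ since $\div\bw=0$ and $\bw|_\Gamma=0$. The same lemma gives $\int_\O(\bpsi\cdot\nabla)\bw\cdot\bw=0$ since $\div\bpsi=0$ and $\bw|_\Gamma=0$. The only surviving nonlinear term is $\Rey\int_\O(\bw\cdot\nabla)\bpsi\cdot\bw$, which Lemma \ref{extension_dato_dirichlet} bounds by $\Rey\,\eps\|\bw\|_{H^1(\O)}^2$.

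On the left, I use the Poincar\'e inequality \eqref{Poincare}, which I assume is of the form $|\bw|_{H^1(\O)}^2\ge C_p\|\bw\|_{H^1(\O)}^2$ on $\bH^1_0(\O)$. On the right:
\[
\Big|\int_\O\tilde{\bf}\cdot\bw\Big|\le\|\tilde{\bf}\|_{H^{-1}(\O)}|\bw|_{H^1(\O)}\le \|\tilde{\bf}\|_{H^{-1}(\O)}\|\bw\|_{H^1(\O)},
\qquad
\Big|\int_\O\nabla\bpsi:\nabla\bw\Big|\le\|\bpsi\|_{H^1(\O)}\|\bw\|_{H^1(\O)}.
\]
Combining these gives
\[
(C_p-\Rey\,\eps)\|\bw\|_{H^1(\O)}^2\le\big(\|\tilde{\bf}\|_{H^{-1}(\O)}+\|\bpsi\|_{H^1(\O)}\big)\|\bw\|_{H^1(\O)}.
\]
The hypothesis $\eps<C_p/\Rey$ makes the coefficient positive, so dividing through yields
\[
\|\bw\|_{H^1(\O)}\le\frac{\|\tilde{\bf}\|_{H^{-1}(\O)}+\|\bpsi\|_{H^1(\O)}}{C_p-\Rey\,\eps}.
\]

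Finally, by the triangle inequality, $\|\bv\|_{H^1(\O)}\le\|\bw\|_{H^1(\O)}+\|\bpsi\|_{H^1(\O)}$. Writing $\|\bpsi\|_{H^1(\O)}=\frac{(C_p-\Rey\eps)\|\bpsi\|_{H^1(\O)}}{C_p-\Rey\eps}$ and adding produces exactly the numerator $\|\tilde{\bf}\|_{H^{-1}(\O)}+(1+C_p-\Rey\,\eps)\|\bpsi\|_{H^1(\O)}$ in \eqref{a_priorio_estim_v}.

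The main point needing care is the precise form of the Poincar\'e constant in \eqref{Poincare}. It must act as a coercivity constant relating $|\cdot|_{H^1}^2$ to $\|\cdot\|_{H^1}^2$, because Lemma \ref{extension_dato_dirichlet} is stated in the full $H^1$ norm. If $C_p$ is defined differently, the constants must be adjusted accordingly.

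A second point is that $\tilde{\bf}\in\bH^{-1}(\O)$ must be a legitimate functional. This holds since $(\bpsi\cdot\nabla)\bpsi$ defines one by Lemma \ref{prop_oprerador_no_lineal}(a).
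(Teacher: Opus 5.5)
Your proposal is correct and matches the paper's proof step by step. It uses the same splitting $\bv=\bv^0+\bpsi$ tested against the divergence-free $\bv^0$, the same two cancellations from Lemma~\ref{prop_oprerador_no_lineal}(c), and the $\eps$-bound of Lemma~\ref{extension_dato_dirichlet} for the remaining convective term. The Poincar\'e inequality \eqref{Poincare} is indeed the coercivity form you assumed, and the final triangle inequality produces the stated numerator. The one step left implicit, here as in the paper, is $\div\bv=0$; it holds because the compatibility condition $\int_\Gamma\tilde{\bg}\cdot\bn=0$ makes $\div\bv\in L^2_0(\O)$ an admissible test function in the second equation.
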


\begin{remark}\label{equivalent_problem}
For the case of homogeneous Dirichlet boundary conditions (e.g. $\bg=0$), following the ideas in \cite[Chapter IV, Section 1]{girault1986}, in particular Theorem 1.4, the analysis is carried out in the space
\[
\bV = \{\boldsymbol{\phi} \in \mathbf{H}^1(\Omega) : \operatorname{div}\boldsymbol{\phi} = 0\}.
\]

If $\bv \in \bV$ satisfies
\[
\int_\Omega \nabla \bv : \nabla \bw
+ \mathrm{Re} \int_\Omega (\bv\cdot\nabla)\bv \cdot \bw
= \int_\Omega \mathbf{f}\cdot \bw,
\qquad \forall\, \bw \in \bV,
\]
then, by the inf-sup condition associated with the bilinear form\\
$b(\bw,q) = \int_\Omega q\,\operatorname{div}\bw$, it can be concluded that there exists
$(\bv,q)\in \bH^1_0(\Omega)\times L^2_0(\Omega)$ solving \eqref{NSweak_regular_data}.
\end{remark} 

\begin{proof}
Following the strategy employed, for example, in \cite{temam1977} to establish existence and uniqueness results for non-homogeneous problems, we define $$\bv^0 = \bv-\bpsi$$
Thus, $\bv^0\in\bH^1_0(\O)$ and $\div\bv^0 = 0$. From Remark \ref{equivalent_problem}, it suffices to consider the following problem:

\begin{align*}
\int_\Omega \nabla (\bv^0+\bpsi) : \nabla \bw
+ \Rey \int_\Omega \big( ((\bv^0+\bpsi)\cdot\nabla)(\bv^0+\bpsi) \big)\cdot \bw
= \int_\Omega \bf\cdot \bw.
\end{align*}

Taking $\bw = \bv^0$ we get
\begin{align*}
\|\nabla \bv^0\|_{L^2(\O)}^2
+ \int_\Omega \nabla \bpsi : \nabla \bv^0
+ \Rey \int_\Omega \Big[
(\bv^0\cdot\nabla)\bv^0\cdot \bv^0
+ (\bpsi\cdot\nabla)\bv^0\cdot \bv^0 \\
+ (\bv^0\cdot\nabla)\bpsi\cdot \bv^0
+ (\bpsi\cdot\nabla)\bpsi\cdot \bv^0
\Big]
= \int_\Omega \bf\cdot \bv^0.
\end{align*}

From property (c) of Lemma \ref{prop_oprerador_no_lineal} we have that  $\int_\Omega (\bv^0\cdot\nabla)\,\bv^0\cdot \bv^0 =0$ and\\  $\int_\Omega(\bpsi\cdot\nabla)\,\bv^0 \cdot \bv^0 =0$ so, 
 
\begin{align*}
\|\nabla \bv^0\|_{L^2(\O)}^2
 &= \int_\Omega \bf\cdot \bv^0
- \int_\Omega \nabla \bpsi : \nabla \bv^0
- \Rey \int_\Omega (\bv^0\cdot\nabla)\bpsi\cdot \bv^0
- \Rey \int_\Omega (\bpsi\cdot\nabla)\bpsi\cdot \bv^0.\\
 &= \int_\Omega \tilde{\bf}\cdot \bv^0
- \int_\Omega \nabla \bpsi : \nabla \bv^0
- \Rey \int_\Omega (\bv^0\cdot\nabla)\bpsi\cdot \bv^0.
\end{align*}

Using Holder inequality, item (a) of Lemma \ref{prop_oprerador_no_lineal} and Lemma \ref{extension_dato_dirichlet} It follows that
\begin{align*}
\|\nabla \bv^0\|_{L^2(\O)}^2 &\leq \|\tilde{\bf}\|_{H^{-1}(\Omega)} \|\bv^0\|_{H^1(\Omega)}
+ \|\nabla \bpsi\|_{L^2(\O)} \|\nabla \bv^0\|_{L^2(\O)}
+ \Rey\,\eps \|\bv^0\|_{H^1(\O)}^2
\end{align*}

Now, since $\bv^0 \in \bH^1_0(\Omega)$, by the Poincaré inequality, we can affirm that there exists a constant $C_p$ such that
\begin{equation}\label{Poincare}
C_p \|\bv^0\|^2_{H^1(\Omega)} \leq \|\nabla \bv^0\|^2_{L^2(\Omega)}.
\end{equation}

So,
\begin{align*}
\left( C_p -  \Rey\,\eps \right)
\|\bv^0\|_{H^1(\O)}
\leq \|\tilde{\bf}\|_{H^{-1}(\O)} + \|\bpsi\|_{H^1(\O)}.
\end{align*}
and since  $C_p-\Rey\,\eps>0$ we obtain

$$\|\bv^0\|_{H^1(\O)}\leq \frac{\|\tilde{\bf}\|_{H^{-1}(\O)} + \|\bpsi\|_{H^1(\O)}}{ C_p - \Rey\,\eps}.$$

Finally, 

\begin{align*}
    \|\bv\|_{H^1(\O)} &\leq \|\bv^0\|_{H^1(\O)}+\|\bpsi\|_{H^1(\O)}\leq \frac{\|\tilde{\bf}\|_{H^{-1}(\O)} + (1+C_p-\Rey\,\eps)\|\bpsi\|_{H^1(\O)}}{ C_p - \Rey\,\eps}
\end{align*}
and the proof is complete.
\qed
\end{proof}

As a direct consequence of Lemma \ref{extension_dato_dirichlet}  and Lemma \ref{estimacion_||v||} we have the following a priori estimate.
\begin{corollary} 
Let $\bv$ be the solution of \eqref{NSweak_regular_data} and let $0 < \eps < \frac{C_p}{Re}$, then
\begin{equation} \label{cota_v}
    \|\bv\|_{H^1(\O)} \leq \frac{\|\bf\|_{H^{-1}(\O)} + \Rey\, C_aC_\eps^2 \| \tilde{\bg} \|^2_{H^{1/2}(\Gamma)} + (1+C_p-\Rey\,\eps)C_\eps\|\tilde{\bg}\|_{H^{1/2}(\Gamma)}}{ C_p - \Rey\,\eps}
\end{equation} 
where  $C_a,C_p$ and $C_{\eps}$ 
are as in the previous lemmas.
\end{corollary}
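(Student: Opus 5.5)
The plan is to start from estimate \eqref{a_priorio_estim_v} of Lemma \ref{estimacion_||v||} and bound the two data-dependent quantities on its right-hand side, $\|\tilde{\bf}\|_{H^{-1}(\O)}$ and $\|\bpsi\|_{H^1(\O)}$, in terms of $\|\bf\|_{H^{-1}(\O)}$ and $\|\tilde{\bg}\|_{H^{1/2}(\Gamma)}$. Here $\bpsi=\bpsi_\eps$ is the extension of $\tilde{\bg}$ given by Lemma \ref{extension_dato_dirichlet} with $\bxi=\tilde{\bg}$. This choice is admissible because $\tilde{\bg}\in\bH^{1/2}(\Gamma)$ and $\int_\Gamma\tilde{\bg}\cdot\bn=0$. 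The denominator $C_p-\Rey\,\eps$ is positive and unchanged throughout, so it suffices to bound the numerator.

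First, by Lemma \ref{extension_dato_dirichlet} we have $\|\bpsi\|_{H^1(\O)}\le C_\eps\|\tilde{\bg}\|_{H^{1/2}(\Gamma)}$. Since $1+C_p-\Rey\,\eps>0$, this immediately yields the third term of the numerator in \eqref{cota_v}.

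Second, write $\tilde{\bf}=\bf-\Rey(\bpsi\cdot\nabla)\bpsi$ and apply the triangle inequality:
\[
\|\tilde{\bf}\|_{H^{-1}(\O)}\le \|\bf\|_{H^{-1}(\O)}+\Rey\,\|(\bpsi\cdot\nabla)\bpsi\|_{H^{-1}(\O)}.
\]
To bound the last term, I test against $\bw\in\bH^1_0(\O)$ and use item (a) of Lemma \ref{prop_oprerador_no_lineal}:
\[
\left|\int_\O(\bpsi\cdot\nabla)\bpsi\cdot\bw\right|\le C_a\|\bpsi\|_{H^1(\O)}^2\|\bw\|_{H^1(\O)}.
\]
Dividing by the norm of $\bw$ and taking the supremum gives $\|(\bpsi\cdot\nabla)\bpsi\|_{H^{-1}(\O)}\le C_a C_\eps^2\|\tilde{\bg}\|^2_{H^{1/2}(\Gamma)}$. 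Substituting these two bounds into \eqref{a_priorio_estim_v} produces exactly \eqref{cota_v}.

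The only delicate point is a norm mismatch. The $H^{-1}$ norm was defined as a supremum over $|\bw|_{H^1}$, whereas Lemma \ref{prop_oprerador_no_lineal}(a) produces the full norm $\|\bw\|_{H^1}$. On $\bH^1_0(\O)$ we have $\|\bw\|_{H^1}\le C_p^{-1/2}|\bw|_{H^1}$ by \eqref{Poincare}, so strictly speaking an extra constant could appear. I would handle this in the same way as in the proof of Lemma \ref{estimacion_||v||}, where $\|\tilde{\bf}\|_{H^{-1}}$ is paired with $\|\bv^0\|_{H^1}$: the duality is interpreted with respect to the $H^1$ norm, or equivalently the Poincaré factor is absorbed into $C_a$. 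This is consistent with the statement's phrase ``$C_a$ as in the previous lemmas''. With that convention the rest of the argument is pure substitution.
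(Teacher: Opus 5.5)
Your proposal is correct and follows the paper's argument. The paper states the corollary as a direct consequence of Lemma \ref{extension_dato_dirichlet} and Lemma \ref{estimacion_||v||}, which amounts to inserting $\|\bpsi\|_{H^1(\O)}\le C_\eps\|\tilde{\bg}\|_{H^{1/2}(\Gamma)}$ and $\|\tilde{\bf}\|_{H^{-1}(\O)}\le\|\bf\|_{H^{-1}(\O)}+\Rey\,C_aC_\eps^2\|\tilde{\bg}\|^2_{H^{1/2}(\Gamma)}$ into \eqref{a_priorio_estim_v}, as you do. On the norm mismatch you flag, your first option (duality against the full $H^1$ norm) is the right one and costs nothing. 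The proof of Lemma \ref{estimacion_||v||} only uses a bound $\int_\O\tilde{\bf}\cdot\bv^0\le K\|\bv^0\|_{H^1(\O)}$, and since $|\bv^0|_{H^1(\O)}\le\|\bv^0\|_{H^1(\O)}$ you can take $K=\|\bf\|_{H^{-1}(\O)}+\Rey\,C_a\|\bpsi\|^2_{H^1(\O)}$ directly. This reproduces the stated constants exactly, whereas absorbing a factor $C_p^{-1/2}$ into $C_a$ would change them, so the two options are not equivalent.
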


\section{Finite Element Aproximation}\label{fem}

Let $\Omega$ be a bounded domain with polygonal boundary and 
let  $\left\{{\mathcal T}_h\right\}$ be a family of triangulations of $\O$ such that
any two triangles in ${\mathcal T}_h$ share at most a vertex or an
edge. Let $h$ stand for the mesh-size; namely $h=\max_{T\in{\mathcal
T}_h}h_T$, with $h_T$ being the diameter of the triangle $T$. We
assume that the family of triangulations $\left\{{\mathcal T}_h\right\}$ satisfies the shape-regularity condition, i.e., there exists a constant $\sigma>0$
such that $\frac{h_T}{\rho_T} \le \sigma$, where $\rho_T$ is the
diameter of the largest circle contained in $T$.
For any \( T \in \mathcal{T}_h \), we denote by $\mathcal{E}_T$ the set of edges of $T$,  \( \mathcal{E}_h = \bigcup_{ T \in  \mathcal{T}_h }  \mathcal{E}_T \) and we decompose
\[
\mathcal{E}_h = \mathcal{E}_{h,\Omega} \cup \mathcal{E}_{h,\Gamma}, \quad \mathcal{E}_{h,\Omega} \cap \mathcal{E}_{h,\Gamma} = \emptyset,
\]
where \( \mathcal{E}_{h,\Gamma} \) denotes the set of all edges contained in \( \Gamma \) and $\mathcal{E}_{h,\Omega} = \mathcal{E}_h \setminus \mathcal{E}_{h,\Gamma} $. 
Given \( \ell \in \mathcal{E}_h \), we denote by \( \omega_\ell \) the union of the triangles in \( \mathcal{T}_h \) having \( \ell \) as an edge. Similarly, \( \omega_T \), for \( T \in \mathcal{T}_h \), is the union of all triangles sharing an edge with \( T \).

We consider the stable spaces $\bV_h$ and $Q_h$ taking the \textit{Taylor-Hood} elements, such that $\bV_h=\bH^1(\Omega)\cap(P_2(\matT_h))^2$, $\bV_{h,0}=\bH^1_0(\Omega)\cap(P_2(\matT_h))^2$ and $Q_h = L^2_0(\O)\cap P_1(\matT_h)$.  Where $P_k(\matT_h)$ denotes the 
piecewise polynomials of degree less than or equal to $k$ on the mesh $\matT_h$.

Throughout the paper, $C$ denotes a generic constant, not necessarily the same at each occurrence, that is independent of the parameter $h$.
Given two quantities $A$ and $B$ the notation $A\lesssim B$ means that $A\le CB$. We also denote by $A \sim B$ when $A\lesssim B$ and $B\lesssim A$.

From now on, we set $\tilde{\bg}=\bg_h$, where $\bg_h$ denotes a regular approximation of $\bg$ such that $\bg_h$ is the trace of a function in $\bV_h$. Possible choices for $\bg_h$ include the $L^2(\Gamma)$-projection, the Carstensen interpolant, and the Lagrange interpolant (all of them  were considered in \cite{duran2020} where the authors analyze the numerical approximation of the Stokes problem with not smooth data). However, in general, these constructions do not guaranty the compatibility condition $\int_\Gamma \bg_h \cdot \bn = 0$. 
In \cite{apel2026}, the authors argue that attaining a sufficiently high approximation order for the discretization error $\bg-\bg_h$ is more important for convergence than enforcing the discrete compatibility condition itself. Nevertheless, they also provide a procedure to enforce this condition. 
Throughout our analysis, we assume that $\bg_h$ is chosen so that the compatibility condition holds.

We consider the problem of finding $(\bu_h,p_h)\in\bV_h\times Q_h$ such that $\bu_h|_\Gamma=\bg_h$ and

\begin{equation}\label{NS_fem} \left\{
	 \begin{array}{rll}
        \int_\O\nabla\bu_h:\nabla\bw_h+\Rey\, \int_\O (\bu_h\cdot\nabla)\bu_h\cdot\bw_h-\int_\O p_h\div\bw_h  &= \int_\O\bf\bw_h \quad &\forall\bw_h\in\bV_{h,0} \\
		\int_\O q_h\div\bu_h  &=0 \quad &\forall q_h\in Q_h
	\end{array} \right.
\end{equation}

In \cite{gunzburger1983}, existence and uniqueness are proved for this problem. 
On the other hand, an a priori error estimate was established in \cite{ArMe}, where it is shown that 
\begin{equation}\label{estima-error-u-uh}
\| \bu - \bu_h \|_{L^4(\Omega)} \lesssim \| \bg - \bg_h \|_{L^2(\Gamma)}.
\end{equation} 

The following lemma, which provides an extension of $\bg_h$, is a fundamental tool for obtaining an a priori estimate for the solution $\bu_h$ of \eqref{NS_fem}.

\begin{lemma}\label{extension_gh}
Let $\bg_h \in \bH^{1/2}(\Gamma)$ be the trace of a function on $\bV_h$  satisfying the compatible condition $\int_{\Gamma} \bg_h \cdot \bn =0$. Then, there exists 
an extension function $\bG_h\in \bV_h $ such that:

$$  \bG_h|_{\Gamma} = \bg_h, \qquad  \div \bG_h = 0, $$

and $\|\bG_h\|_{\bH^1(\Omega)} \leq C_g \|\bg_h\|_{H^{1/2}(\Gamma)}$, with $C_g$ independent of $h$.
\end{lemma}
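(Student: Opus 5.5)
We construct $\bG_h$ in two steps. First we take a stable discrete lifting of $\bg_h$. Then we correct its divergence with a discrete Stokes-type problem in $\bV_{h,0}$, using the uniform inf-sup stability of the Taylor--Hood pair. Divergence-free is understood in the sense the discrete analysis can use: we aim for $\int_\O q_h\div\bG_h=0$ for all $q_h\in Q_h$, which is the constraint in \eqref{NS_fem}. Pointwise $\div\bG_h=0$ cannot be guaranteed in general for $P_2P_1$, so the statement should be read with this weaker constraint.

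\emph{Step 1 (discrete lifting).} Let $\bE\bg_h\in\bH^1(\O)$ be a continuous right inverse of the trace, e.g. the harmonic extension, so that $\|\bE\bg_h\|_{H^1(\O)}\lesssim\|\bg_h\|_{H^{1/2}(\Gamma)}$. Apply a Scott--Zhang quasi-interpolant $\Pi_h$ onto $(P_2(\matT_h))^2\cap\bH^1(\O)$, with the boundary degrees of freedom defined through integrals over boundary edges. This interpolant is $H^1$-stable with a constant depending only on $\sigma$. It also reproduces piecewise $P_2$ boundary traces, and $\bg_h$ is such a trace by assumption. Hence $\bL_h:=\Pi_h\bE\bg_h\in\bV_h$ satisfies $\bL_h|_\Gamma=\bg_h$ and $\|\bL_h\|_{H^1(\O)}\le C\|\bg_h\|_{H^{1/2}(\Gamma)}$ with $C$ independent of $h$.

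\emph{Step 2 (divergence correction).} By the uniform discrete inf-sup condition for Taylor--Hood elements on shape-regular meshes, there is $\beta>0$ independent of $h$ such that
\[
\sup_{\bw_h\in\bV_{h,0}}\frac{\int_\O q_h\div\bw_h}{\|\bw_h\|_{H^1(\O)}}\ge\beta\|q_h\|_{L^2(\O)}\qquad\forall q_h\in Q_h .
\]
Consequently the operator $B_h:\bV_{h,0}\to Q_h'$ is onto. We can therefore pick $\bz_h\in\bV_{h,0}$, namely the element of minimal norm in the orthogonal complement of $\ker B_h$, with
\[
\int_\O q_h\div\bz_h=-\int_\O q_h\div\bL_h\quad\forall q_h\in Q_h,
\qquad
\|\bz_h\|_{H^1(\O)}\le\beta^{-1}\|\div\bL_h\|_{L^2(\O)}.
\]
Setting $\bG_h=\bL_h+\bz_h$ gives $\bG_h|_\Gamma=\bg_h$, since $\bz_h$ vanishes on $\Gamma$. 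It also gives $\int_\O q_h\div\bG_h=0$ for every $q_h\in Q_h$. The compatibility condition gives $\int_\O\div\bG_h=\int_\Gamma\bg_h\cdot\bn=0$, so the constraint holds against all of $P_1(\matT_h)$, constants included, and not only mean-zero functions. Finally, $\|\bG_h\|_{H^1}\le(1+\sqrt2\,\beta^{-1})\|\bL_h\|_{H^1}\le C_g\|\bg_h\|_{H^{1/2}(\Gamma)}$.

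\emph{Main obstacle.} I expect the main difficulty to be Step 1, specifically the $h$-uniform stability of the discrete lifting in the $H^{1/2}(\Gamma)$ norm. This requires the Scott--Zhang construction to preserve piecewise-$P_2$ boundary data exactly while keeping local $H^1$-stability up to the boundary; I would cite the standard Scott--Zhang result for this. The inf-sup step is standard once $\beta$ is known to be uniform, which holds for $P_2P_1$ on shape-regular meshes with at least one interior vertex per boundary-touching triangle.
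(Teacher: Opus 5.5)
Your construction is the paper's two-step argument, and it is correct for the conclusion you actually state. First you take a stable lifting $\bE_h\in\bV_h$ of $\bg_h$; the paper takes it from \cite{duran2020}, and your Scott--Zhang construction, with boundary edges chosen for boundary nodes, is a valid way to obtain it. Then you correct it in $\bV_{h,0}$ using the discrete inf-sup condition.

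The only real difference is the divergence condition. Here your caveat is right and the paper's proof is not. The paper sets $q_h:=\div\bE_h$ and asserts $\div(\bV_{h,0})=Q_h$, so that $\div\bw_h=q_h$ can be solved exactly.

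\begin{itemize}
\item With continuous pressures, which the paper itself uses in the reliability proof, $\div\bE_h$ is a discontinuous piecewise linear function and generally not in $Q_h$.
\item The inf-sup condition only says that $\bw_h\mapsto\int_\O(\cdot)\,\div\bw_h$ maps $\bV_{h,0}$ onto $Q_h'$. That is exactly your discrete constraint.
\item Solving $\div\bw_h=\div\bE_h$ pointwise would require $\div(\bV_{h,0})$ to be all mean-zero broken $P_1$ functions (the Scott--Vogelius property). This fails for quadratics on general meshes.
\end{itemize}

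For example, take a patch of $n\ge4$ triangles around a single interior vertex, with no two spokes collinear. Then $\dim\bV_{h,0}=2n+2<3n-1$. On the other hand, counting $C^1$ cubic stream functions on the patch shows that the divergences of zero-flux fields in $\bV_h$ fill the whole $(3n-1)$-dimensional space of mean-zero broken linears. So some compatible $\bg_h$ has no exactly divergence-free extension in $\bV_h$, even though Taylor--Hood is stable on that patch. Your weakened reading is the provable one.

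Two minor points. First, when you say the constraint holds against all of $P_1(\matT_h)$: in the paper's notation that is the broken space. Orthogonality to it would force $\div\bG_h=0$ pointwise and contradict your own caveat. You mean continuous piecewise linears, i.e.\ $Q_h\oplus\matR$.

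Second, your version still serves where the lemma is used in \eqref{cota-uh}. The only place there where pointwise $\div\bG_h=0$ is genuinely needed is to discard $\Rey\int_\O(\bG_h\cdot\nabla)\bu_h^0\cdot\bu_h^0$. Lemma \ref{prop_oprerador_no_lineal}(a) bounds that term by $\Rey\,C_a\|\bG_h\|_{H^1(\O)}\|\bu_h^0\|^2_{H^1(\O)}$. This only doubles the $\Rey\,C_aC_g\|\bg_h\|_{H^{1/2}(\Gamma)}$ term in the smallness condition. (That proof also uses $\div\bu_h^0=0$, which fails for Taylor--Hood for the same reason, but that is not an issue with this lemma.)
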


\begin{proof}
Let $ \bE_h \in \bV_h$ be an extension of $\bg_h$  such that $\bE_h|_{\Gamma} = \bg_h $ and $\|\bE_h\|_{\bH^1(\Omega)} \lesssim \|\bg_h\|_{H^{1/2}(\Gamma)}$ as constructed, for instance, in \cite{duran2020}. We emphasize that, in general, $\bE_h$ is not divergence-free.

Let us define
\[
q_h := \div \bE_h 
\]
Since $(\bV_{h,0}, Q_h)$ is inf-sup stable,  and $\div(\bV_{h,0}) = Q_h $,
the discrete divergence operator
\[
B_h : \bV_{h,0} \to Q_h, \quad B_h(\bv_h)=\div \bv_h,
\]
is surjective, and so, there exists $\bw_h \in \bV_{h,0}$ such that

$$ \div \bw_h = q_h, $$
and moreover 
$\|\bw_h\|_{\bH^1(\Omega)} \lesssim \|q_h\|_{L^2(\Omega)}$.

\medskip

Now, we define the correction $\bG_h\in \bV_h$ as
\[
\bG_h := \bE_h - \bw_h.
\]

Hence
\[
\div \bG_h = \div \bE_h - \div \bw_h
= q_h - q_h = 0, 
\]
and since $\bw_h \in V_{h,0}$, we have 
\[
\int_{\Gamma} \bG_h \cdot \bn  = \int_{\Gamma} \bE_h \cdot \bn  =\int_{\Gamma}  \bg_h \cdot \bn =0. 
\]

Finally, by the triangle inequality, we get
\[
\|\bG_h\|_{H^1(\Omega)}
\le \|\bE_h\|_{H^1(\Omega)} + \|\bw_h\|_{H^1(\Omega)}
\lesssim  \|\bg_h\|_{H^{1/2}(\Gamma)}.
\]
\qed
\end{proof}

We are now in a position to derive an a priori estimate for $\bu_h$, which will be used in the subsequent analysis.

\begin{lemma}
Let $ \bu_h$ be the solution of \eqref{NS_fem}. Then, assuming that\\ $C_p - \Rey\, C_a C_g \, \| \bg_h \|_{H^{1/2}(\Gamma)} >0$,
we get

\begin{equation} \label{cota-uh}
\| \bu_h\|_{H^1(\Omega)} \leq \frac{  \|\bf\|_{H^{-1}(\Omega)}   +  C_g (1 + C_p ) \| \bg_h \|_{H^{1/2}(\Gamma)}  } {C_p -   \Rey C_a C_g\| \bg_h \|_{H^{1/2}(\Gamma)}    }
\end{equation}
\end{lemma}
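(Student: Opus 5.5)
We mimic the proof of Lemma \ref{estimacion_||v||} at the discrete level, with the extension $\bpsi_\eps$ replaced by the discrete divergence-free extension $\bG_h\in\bV_h$ from Lemma \ref{extension_gh}.

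\textbf{Splitting.} Write $\bu_h^0:=\bu_h-\bG_h$. Then $\bu_h^0\in\bV_{h,0}$, so it is an admissible test function in \eqref{NS_fem}, and the pressure term drops out:
\[
\int_\O p_h\div\bu_h^0=\int_\O p_h\div\bu_h=0,
\]
using $\div\bG_h=0$ and the second equation of \eqref{NS_fem} with $q_h=p_h$.

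\textbf{Expanding the convective term.} Test with $\bw_h=\bu_h^0$ and expand $(\bu_h\cdot\nabla)\bu_h\cdot\bu_h^0$ with $\bu_h=\bu_h^0+\bG_h$. The term $\int_\O (\bu_h\cdot\nabla)\bu_h^0\cdot\bu_h^0$ is the delicate point. Lemma \ref{prop_oprerador_no_lineal}(c) requires $\div\bu_h=0$ pointwise. For Taylor--Hood, $\div\bu_h$ is only discretely divergence-free: $\div\bu_h\in P_1$ but not necessarily in $L^2_0\cap P_1$ tested against all of $L^2$.
\begin{itemize}
\item If we use the standard skew-symmetric form of the trilinear term, or note that $\div\bu_h^0\in Q_h$ and $\div\bu_h\perp Q_h$ give $\div\bu_h=0$, the term vanishes. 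Indeed, $\div\bV_h\subset P_1$ discontinuous in general, so this needs care. Since $\div\bV_{h,0}=Q_h$ is asserted in the proof of Lemma \ref{extension_gh}, we get $\div\bu_h^0\in Q_h$ and $\int_\O(\div\bu_h^0)^2=0$. Hence $\div\bu_h^0=0$ and $\div\bu_h=0$ exactly.
\item Then Lemma \ref{prop_oprerador_no_lineal}(c) kills $\int_\O(\bu_h\cdot\nabla)\bu_h^0\cdot\bu_h^0$.
\end{itemize}
This justification is the main obstacle, and we would rely on the paper's stated identity $\div\bV_{h,0}=Q_h$.

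\textbf{Energy estimate.} We are left with
\[
\|\nabla\bu_h^0\|^2_{L^2}=\int_\O\bf\cdot\bu_h^0-\int_\O\nabla\bG_h:\nabla\bu_h^0-\Rey\int_\O(\bu_h\cdot\nabla)\bG_h\cdot\bu_h^0 .
\]
To match the target denominator, we bound the last term by Lemma \ref{prop_oprerador_no_lineal}(a):
\[
\Rey\, C_a\|\bu_h\|_{H^1}\|\bG_h\|_{H^1}\|\bu_h^0\|_{H^1}\le \Rey\, C_aC_g\|\bg_h\|_{H^{1/2}}\|\bu_h\|_{H^1}\|\bu_h^0\|_{H^1}.
\]
This is kept in terms of $\bu_h$ rather than $\bu_h^0$, which explains why no $\|\bg_h\|^2$ term appears in \eqref{cota-uh}. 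The other terms are bounded by $\|\bf\|_{H^{-1}}\|\bu_h^0\|_{H^1}$ and $C_g\|\bg_h\|_{H^{1/2}}\|\bu_h^0\|_{H^1}$. Applying Poincaré \eqref{Poincare} on the left and dividing by $\|\bu_h^0\|_{H^1}$ gives
\[
C_p\|\bu_h^0\|_{H^1}\le\|\bf\|_{H^{-1}}+C_g\|\bg_h\|_{H^{1/2}}+\Rey\, C_aC_g\|\bg_h\|_{H^{1/2}}\|\bu_h\|_{H^1}.
\]

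\textbf{Conclusion.} Use $\|\bu_h\|_{H^1}\le\|\bu_h^0\|_{H^1}+C_g\|\bg_h\|_{H^{1/2}}$, i.e. multiply it by $C_p$ and insert the previous bound:
\[
C_p\|\bu_h\|_{H^1}\le \|\bf\|_{H^{-1}}+C_g(1+C_p)\|\bg_h\|_{H^{1/2}}+\Rey\, C_aC_g\|\bg_h\|_{H^{1/2}}\|\bu_h\|_{H^1}.
\]
Absorbing the last term into the left side using the hypothesis $C_p-\Rey\, C_aC_g\|\bg_h\|_{H^{1/2}}>0$ yields \eqref{cota-uh} exactly.
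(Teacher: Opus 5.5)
This is essentially the paper's proof: you use the same splitting $\bu_h^0=\bu_h-\bG_h$, the same test function $\bw_h=\bu_h^0$, and the same step of taking $q_h=\div\bu_h^0$ (via the asserted identity $\div\bV_{h,0}=Q_h$) to eliminate $\int_\O(\bu_h\cdot\nabla)\bu_h^0\cdot\bu_h^0$. The paper expands the convection into four pieces, bounds $\|\bu_h^0\|_{H^1(\Omega)}$ with an extra $\Rey\, C_aC_g^2\|\bg_h\|^2_{H^{1/2}(\Gamma)}$ that cancels after adding $\|\bG_h\|_{H^1(\Omega)}$, whereas you keep $(\bu_h\cdot\nabla)\bG_h\cdot\bu_h^0$ together and absorb directly in $\|\bu_h\|_{H^1(\Omega)}$, which reaches the same constant a bit more cleanly; your caution about the flagged step is justified, though, since for continuous-pressure Taylor--Hood $\div\bV_{h,0}\not\subset Q_h$ in general, so both your argument and the paper's rest on that asserted identity (or would need a skew-symmetrized convection term in \eqref{NS_fem}).
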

\begin{proof}
Let us defined $\bu_h^0 \in \bV_{h,0}$ as
$$ \bu_h^0 = \bu_h - \bG_h,$$
where $\bG_h$ is the extension of $\bg_h$ given in Lemma \ref{extension_gh}.
Then, $ \forall\bw_h\in\bV_{h,0}$, we have
    \begin{equation} \label{ecuacion-uh0}
	 \begin{array}{rl}
        \int_\O\nabla\bu_h^0:\nabla\bw_h &+ \int_\O\nabla\bG_h:\nabla\bw_h+  \Rey\, \int_\O (\bu_h^0\cdot\nabla)\bu^0_h\cdot\bw_h +  \Rey\, \int_\O (\bu_h^0\cdot\nabla)\bG_h\cdot\bw_h \\
        &+  \Rey\, \int_\O (\bG_h\cdot\nabla)\bu^0_h\cdot\bw_h +  \Rey\, \int_\O (\bG_h\cdot\nabla)\bG_h\cdot\bw_h       
        -\int_\O p_h\div\bw_h\\
        &= \int_\O\bf\bw_h  \\
	\end{array} 
\end{equation}

Now, we observe that, since $\int_{\Omega} \div \bu_h q_h =0 $ for all $q_h \in Q_h$ and $\div \bG_h =0$ we get
$$\int_{\Omega} \div \bu_h^0  q_h = 0, \quad \forall q_h \in Q_h,$$
and so taking $q_h = \div \bu_h^0$ we can conclude that $\div \bu_h^0 =0$. Therefore,
taking $\bw_h = \bu_h^0 $ in \eqref{ecuacion-uh0} we get $\int_\O (\bu_h^0\cdot\nabla)\bu^0_h\cdot\bu_h^0=0$ and  $\int_\O (\bG_h\cdot\nabla)\bu^0_h\cdot\bu_h^0=0$. Then 
\begin{equation} 
	 \begin{array}{rl}
        \| \nabla\bu_h^0 \|_{L^2(\Omega)}^2 &\leq   \| \nabla \bG_h \|_{L^2(\Omega)} \| \nabla\bu_h^0 \|_{L^2(\Omega)}   
        +  \Rey\, C_a\, \|  \bG_h \|_{H^1(\Omega)} \|\bu_h^0 \|_{H^1(\Omega)}^2 \\
        &+  \Rey\, C_a\, \|  \bG_h \|^2_{H^1(\Omega)} \|\bu_h^0 \|_{H^1(\Omega)}
+ \|\bf\|_{H^{-1}(\Omega)} \|\bu_h^0 \|_{H^1(\Omega)} \\
	\end{array} 
\end{equation}
Since $\bu_h^0 \in \bH^1_0(\Omega)$,  the Poincaré inequality implies that $C_p \|\bu_h^0\|^2_{H^1(\Omega)} \leq \|\nabla \bu_h^0\|^2_{L^2(\Omega)}$, which  
together with the a priori estimation for the extension $\bG_h$ 
 and the small-data assumption, $C_p - \Rey\, C_a C_g \, \| \bg_h \|_{H^{1/2}(\Gamma)} >0$, allow us to write

$$  \|\bu_h^0\|_{H^1(\Omega)} \leq \frac{ C_g \| \bg_h \|_{H^{1/2}(\Gamma)}  +  \Rey\, C_a C_g^2 \| \bg_h \|^2_{H^{1/2}(\Gamma)}  + \|\bf\|_{H^{-1}(\Omega)} } {C_p -   \Rey\, C_a C_g\| \bg_h \|_{H^{1/2}(\Gamma)}    }, $$
and therefore

$$  \|\bu_h\|_{H^1(\Omega)} \leq \frac{  \|\bf\|_{H^{-1}(\Omega)}   +  C_g (1 + C_p ) \| \bg_h \|_{H^{1/2}(\Gamma)}  } {C_p -   \Rey\, C_a C_g\| \bg_h \|_{H^{1/2}(\Gamma)}    }. $$
\qed
\end{proof}

\begin{remark}
    We observe that if we consider problem \eqref{NSweak_regular_data} with $\tilde{\bg}=\bg_h$, and the extension $\bG_h$ of $\bg_h$  introduced in Lemma \ref{extension_gh} instead of $\bpsi$,  then we obtain an estimate for the solution $\bv$ analogous to \eqref{cota-uh}.
\end{remark}

\section{A posteriori error estimator}\label{sec:aposteriori}

In this section, we present an a posteriori estimator for the finite element solution of the regularized problem. From now on, we consider the case where $\O$ is convex. The goal is to prove both the efficiency and the reliability of the estimator by measuring the error in the $L^4$ norm. 

\begin{assumption}\label{assumption1}
    Let $D\subset\matR^2$ be a convex polygonal domain. Given $\bF\in \bL^{4/3}(D)$, the Stokes problem
    \begin{equation*}       
      \label{Stokes}
    	 \left\{\begin{array}{rcc}
    		-\Delta\phi+\nabla \pi  &= \bF & \textup{in } \, D \\
    		\div\phi  &=0 & \textup{in } \,  D \\
                \phi &= 0 & \textup{on } \, \partial D
    	\end{array}\right.
    \end{equation*}
    has a unique solution $(\phi,\pi)\in \bW^{2,4/3}(D) \times W^{1,4/3}(D)\setminus\matR$ (see \cite[Theorem I.5.4, Remark I.5.6]{girault1986} and \cite{grisvard1979}). Moreover, exists $C_A>0$ independent of $\bF$ such that
    $$\| \phi \|_{W^{2,4/3}(D)} + \| \pi \|_ {W^{1,4/3}(D) }\leq C_A \| \bF \|_{L^{4/3}(D)}$$
\end{assumption}

\begin{remark}\label{inclusion}
    By 1.4.4.5 in \cite{Grisvard1985}, we have $W^{k,4/3}(\O)\subset W^{k-1,4}(\O)$ for $k\geq 1$, and since $L^4(\O)\subset L^2(\O)$, we can conclude in particular that if $\O$ convex and $\phi \in \bW^{2,4/3}(\O)$, then $\phi\in \bH^1(\O)$. On the other hand, since $W^{1,4/3}(\O)\subset L^4(\O)$, we can conclude that if $\pi\in W^{1,4/3}(\O)$ and has zero mean, then $\pi\in L^2_0(\O)$. 
\end{remark}

For our a posteriori analysis, we make the following regularity assumption on the solution of problem \eqref{NSweak_regular_data}, which in particular implies the continuity of $\bv$.

\begin{assumption}\label{assumption2}
Let $\Omega \subset \mathbb{R}^2$ be a convex polygonal domain, let $\mathbf{f}\in L^2(\Omega)$, and let $\tilde{\bg}\in \bH^{1/2}(\Gamma)$ satisfy
$
\int_{\Gamma}\tilde{\bg}\cdot\bn = 0
$. We assume that the solution $(\bv,p)$ of problem \eqref{NSweak_regular_data}, with $\bv|_\Gamma=\tilde{\bg}$, belongs to
$
\bH^{1+s}(\Omega)\times \bigl(H^s(\Omega)\cap L^2_0(\Omega)\bigr)
$
for some $s>0$.
\end{assumption}

\begin{lemma}
    Let $\bz\in \bL^4(\O)$. Then $\bF=|\bz|^2\bz\in \bL^{4/3}(\O)$. Moreover, $\|\bF\|_{L^{4/3}(\O)}=\|\bz\|^3_{L^4(\O)}$.
\end{lemma}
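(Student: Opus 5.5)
The plan is a direct pointwise computation followed by integration. First I will check that $\bF$ is measurable, being a product of measurable functions. Then I will compute its pointwise Euclidean norm. For almost every $x\in\O$,
\[
|\bF(x)| = |\bz(x)|^2\,|\bz(x)| = |\bz(x)|^3 ,
\]
since $|\bz(x)|^2$ is a nonnegative scalar multiplying the vector $\bz(x)$.

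Next I will raise this to the power $4/3$ and integrate:
\[
\int_\O |\bF|^{4/3} = \int_\O |\bz|^{4} = \|\bz\|_{L^4(\O)}^4 < \infty .
\]
This shows at once that $\bF\in \bL^{4/3}(\O)$. Taking the $3/4$ power then gives
\[
\|\bF\|_{L^{4/3}(\O)} = \left(\|\bz\|_{L^4(\O)}^4\right)^{3/4} = \|\bz\|_{L^4(\O)}^3 .
\]

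The only point needing care is the convention for the vector-valued $L^p$ norm. I will use the paper's implicit convention $\|\bz\|_{L^p(\O)}^p=\int_\O |\bz|^p$ with the Euclidean norm $|\cdot|$. With that convention the identity is exact, not merely an inequality. If a componentwise norm were used instead, the equality would only hold up to equivalence constants. No real obstacle is expected beyond fixing this convention.
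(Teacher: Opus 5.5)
Your proof is correct and essentially identical to the paper's: both use the pointwise identity $|\bF|^{4/3}=|\bz|^4$, integrate over $\O$, and take the $3/4$ power. Your remark that the equality is exact for the Euclidean pointwise norm (the convention the paper implicitly uses), and holds only up to constants for other norm conventions, is a worthwhile clarification.
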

\begin{proof} 
Clearly,  $\int_\O ||\bz|^2\bz|^{4/3} =  \int_\O|\bz|^4$  and hence
$$\|\bF\|_{L^{4/3}(\O)}=\left(\int_\O |\bF|^{4/3}\right)^{3/4}=\left(\int_\O |\bz|^4\right)^{3/4}=\|\bz\|^3_{L^4(\O)}.$$ 
\qed
\end{proof}

 We denote $I_h:W^{k,p}(\O)\to P_d(\O)\cap  H^1(\O)$  the Scott-Zhang interpolator \cite{SZ1990} with $d=1,2$.

For all $T\in\matT_h$, $1\leq p<\infty$, the following estimates hold (see \cite[inequality (4.3)]{SZ1990}):
\begin{equation}\label{estimate_interp_1}
    \|\varphi-I_h\varphi\|_{W^{m,p}(T)}\leq C_1 h_T^{k-m}\|\varphi\|_{W^{k,p}(\omega_T)} \quad \forall \varphi\in W^{k,p}(\Omega)
\end{equation}
with $0\leq m \leq k \leq d+1$.

We also have the following result.
\begin{prop}\label{estimacion_lado}
        Let $\varphi\in W^{2,p}(\O)$, $ 1 \leq p < \infty$.  For any $T\in\matT_h$ and any edge $\ell$ of $T$, it holds that
\begin{equation}\label{estimate_interp_2}
    \|\varphi-I_h\varphi\|_{L^p(\ell)}\leq C_2 |\ell| ^{2-\frac1p}\|\varphi\|_{W^{2,p}(\omega_T)} 
\end{equation}
\end{prop}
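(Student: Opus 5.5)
The plan is the standard route: a scaled trace inequality on $T$, followed by the interpolation estimate \eqref{estimate_interp_1}. Set $e=\varphi-I_h\varphi$, restricted to $T$. For any $w\in W^{1,p}(T)$ and any edge $\ell\subset\partial T$ we have the trace inequality
\[
\|w\|_{L^p(\ell)}^p\lesssim h_T^{-1}\|w\|_{L^p(T)}^p+h_T^{p-1}\|\nabla w\|_{L^p(T)}^p .
\]
I would prove it either by mapping to the reference triangle $\hat T$ and using the continuity of the trace $W^{1,p}(\hat T)\to L^p(\hat\ell)$, or directly with the divergence identity $\int_\ell |w|^p\,(\mathbf x-\mathbf a_\ell)\cdot\bn = \int_T \div\big(|w|^p(\mathbf x-\mathbf a_\ell)\big)$. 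Here $\mathbf a_\ell$ is the vertex opposite $\ell$. With this choice $(\mathbf x-\mathbf a_\ell)\cdot\bn$ is constant on $\ell$, equal to $2|T|/|\ell|$, and it vanishes on the other two edges.

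The ingredient I expect to need the most care is that the constants must depend only on the shape-regularity parameter $\sigma$. Shape regularity gives $|\ell|\sim h_T\sim\rho_T$ and $|T|\sim h_T^2$. This keeps the Jacobian factors in the pullback controlled, or equivalently makes the divergence-identity constants uniform. The same equivalence $|\ell|\sim h_T$ is what allows the final power of $h_T$ to be rewritten as a power of $|\ell|$.

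Next I apply the trace inequality to $w=e$ and use \eqref{estimate_interp_1} with $k=2$, taking $m=0$ and $m=1$. This requires $k\le d+1$, which holds for $d=1,2$. We get $\|e\|_{L^p(T)}\le C_1h_T^2\|\varphi\|_{W^{2,p}(\omega_T)}$ and $\|\nabla e\|_{L^p(T)}\le C_1h_T\|\varphi\|_{W^{2,p}(\omega_T)}$. Substituting,
\[
\|e\|_{L^p(\ell)}^p\lesssim \big(h_T^{-1}h_T^{2p}+h_T^{p-1}h_T^{p}\big)\|\varphi\|_{W^{2,p}(\omega_T)}^p= 2h_T^{2p-1}\|\varphi\|_{W^{2,p}(\omega_T)}^p .
\]
Taking $p$-th roots gives $\|e\|_{L^p(\ell)}\lesssim h_T^{2-1/p}\|\varphi\|_{W^{2,p}(\omega_T)}$. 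Finally, $h_T\lesssim|\ell|$ by shape regularity and $2-1/p>0$, so $h_T^{2-1/p}\lesssim |\ell|^{2-1/p}$, which is \eqref{estimate_interp_2}. The constant $C_2$ depends only on $C_1$, $\sigma$ and $p$.

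One technical point remains: the interpolation estimate uses the patch $\omega_T$ associated with the Scott–Zhang interpolant, so the right-hand side is naturally on that patch as stated. Since $\varphi\in W^{2,p}$, the trace of $e\in W^{1,p}(T)$ on $\ell$ is well defined, and no further regularity is required.
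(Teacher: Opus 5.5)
Your proposal is correct and follows essentially the same route as the paper. The paper quotes the scaled trace inequality $\|v\|_{L^p(\ell)}\le (2|\ell|/|T|)^{1/p}(\|v\|_{L^p(T)}+h_T\|\nabla v\|_{L^p(T)})$ from the literature (your divergence-identity argument essentially reproves it), then applies \eqref{estimate_interp_1} with $m=0,1$, and finally uses shape regularity ($h_T\sim|\ell|$, $|T|\sim h_T^2$) to convert the powers of $h_T$ into $|\ell|^{2-1/p}$.
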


\begin{proof}
For any element $T \in \mathcal{T}_h$, any edge $\ell$ of $T$, and any function $v \in W^{1,p}(T)$, we have the following trace estimate (see, for example,  \cite[Lemma 2.1]{AAD}):
    $$\|v\|_{L^p(\ell)}\leq \left(\frac{2|\ell|}{|T|}\right)^{1/p}(\|v\|_{L^p(T)}+h_T\|\nabla v\|_{L^p(T)})$$

Since $\varphi-I_h\varphi\in W^{2,p}(T)$ we can use the previous result and get:
    $$\|\varphi-I_h\varphi\|_{L^p(\ell)}\leq \left(\frac{2|\ell|}{|T|}\right)^{1/p}(\|\varphi-I_h\varphi\|_{L^p(T)}+h_T\|\nabla (\varphi-I_h\varphi))\|_{L^p(T)}).$$ Using \eqref{estimate_interp_1}, we obtain
    $$\|\varphi-I_h\varphi\|_{L^p(\ell)}\leq C \left(\frac{2|\ell|}{|T|}\right)^{1/p}h_T^2\|\varphi\|_{W^{2,p}(\omega_T)}$$
    By the shape-regularity of the mesh, $h_T$ is comparable to $|\ell|$ and $h_T^2$ is comparable to $ |T|$ and so
 
    $$\|\varphi-I_h\varphi\|_{L^p(\ell)}\leq C |\ell|^{1/p}|\ell|^{2-2/p}\|\varphi\|_{W^{2,p}(\omega_T)}=C|\ell|^{2-1/p}\|\varphi\|_{W^{2,p}(\omega_T)}.$$
    \qed
\end{proof}

From now on, we assume that $\bf \in L^4 (\Omega)$ and we denote by $\bf_h$  an approximation of $\bf$ such that $\| \bf -\bf_h\|_{L^4(\O)}\to 0$ as $h\to 0$.
Next, for each $T\in\matT_h$, we define the local a posteriori error estimator as
\begin{equation}\label{estim_local_convexo}
        \eta_T := \left\{h_T^8\|  \mathcal{R}_T \|_{L^4(T)}^4+\sum_{\ell\in\matE_T}|\ell|^5 \|\bJ_\ell \|^4_{L^4(\ell)} +h_T^4\|\div\bu_h\|^4_{L^4(T)} \right\}^{1/4}
\end{equation}

where $$ \mathcal{R}_T=
\bf_h +\Delta\bu_h-\Rey\,(\bu_h\cdot\nabla)\bu_h-\nabla p_h,$$  and $$\bJ_\ell= \frac12 \left[\frac{\partial\bu_h} {\partial\bn}\right]_\ell $$ with
 $$\left[\frac{\partial\bu_h}{\partial\bn}\right]_\ell=\nabla(\bu_h|_{T_1})\cdot\bn -\nabla(\bu_h|_{T_2})\cdot\bn .$$ 

Here, if $\ell=T_1\cap T_2$ and $\bn_1,\bn_2$ denote the outward unit normal vectors to $T_1$ and $T_2$, respectively, then $ \bn_2 =- \bn_1$. In this case, we denote $\bn:=\bn_1$. 

 And the global error estimator $\eta_{\Omega}$ is defined as   
$$ \eta_{\Omega} = \left( \sum_{T\in\matT_h } \eta_T^4 \right) ^{1/4}$$

\begin{prop}\label{Robustness}
    (Robustness)
    Let $\bv$ and $\bu_h$ be solutions of \eqref{NSweak_regular_data} and \eqref{NS_fem}, respectively.  
    Under the assumption that $\Rey$ is small enough, the following estimate holds: 
    $$\|\bv-\bu_h\|_{L^4(\O)}\lesssim
    \eta_{\Omega}
    +h^2\|\bf-\bf_h\|_{L^4(\O)}.$$ 
\end{prop}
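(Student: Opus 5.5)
We will use a duality (Aubin–Nitsche type) argument adapted to the $L^4$ norm, built on Assumption \ref{assumption1} and the lemma on $|\bz|^2\bz$. Set $\be=\bv-\bu_h$ and $r=q-p_h$. Since $\bv|_\Gamma=\bu_h|_\Gamma=\bg_h$, we have $\be\in\bH^1_0(\O)$. Take $\bF=|\be|^2\be\in\bL^{4/3}(\O)$, for which $\|\bF\|_{L^{4/3}(\O)}=\|\be\|^3_{L^4(\O)}$. Let $(\phi,\pi)$ solve the Stokes problem of Assumption \ref{assumption1} with this datum, so that $\|\phi\|_{W^{2,4/3}}+\|\pi\|_{W^{1,4/3}}\le C_A\|\be\|_{L^4}^3$, and by Remark \ref{inclusion} also $\phi\in\bH^1_0$ and $\pi\in L^2_0$. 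Testing the Stokes equation with $\be$ and integrating by parts gives
\[
\|\be\|^4_{L^4(\O)}=\int_\O\nabla\be:\nabla\phi-\int_\O\pi\,\div\be .
\]
Note that $\div\be=-\div\bu_h$, which only produces the divergence residual term.

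The next step is Galerkin orthogonality. Subtract \eqref{NS_fem} from \eqref{NSweak_regular_data} with test function $\phi-I_h\phi\in\bV_{h,0}$, and use that $\bv$ solves the continuous problem tested with $\phi$. This yields
\[
\int_\O\nabla\be:\nabla\phi=\int_\O\bf\cdot\phi-\int_\O\nabla\bu_h:\nabla\phi-\Rey\!\int_\O(\bv\cdot\nabla)\bv\cdot\phi+\int_\O q\div\phi ,
\]
where $\div\phi=0$. Add and subtract the discrete equation tested with $I_h\phi$, and handle the nonlinear term by writing
\[
(\bv\cdot\nabla)\bv-(\bu_h\cdot\nabla)\bu_h=(\be\cdot\nabla)\bv+(\bu_h\cdot\nabla)\be .
\]
This gives
\[
\|\be\|^4_{L^4}=\sum_T\int_T\mathcal R_T\cdot(\phi-I_h\phi)+\sum_T\int_T(\bf-\bf_h)\cdot(\phi-I_h\phi)+\sum_\ell\int_\ell \bJ_\ell\cdot(\phi-I_h\phi)+\int_\O\pi\div\bu_h-\Rey\,N(\be,\phi),
\]
with $N(\be,\phi)=\int_\O(\be\cdot\nabla)\bv\cdot\phi+(\bu_h\cdot\nabla)\be\cdot\phi$. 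The factor $1/2$ in $\bJ_\ell$ accounts for each interior edge being counted twice, and the boundary-edge terms vanish because $\phi-I_h\phi=0$ on $\Gamma$ when the Scott–Zhang operator preserves homogeneous boundary values.

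Each residual term is bounded by Hölder with exponents $(4,4/3)$. For the element terms we use \eqref{estimate_interp_1} with $p=4/3$, $k=2$, $m=0$, giving $\|\mathcal R_T\|_{L^4(T)}h_T^2\|\phi\|_{W^{2,4/3}(\omega_T)}$. For the edge terms we use Proposition \ref{estimacion_lado} with $p=4/3$, giving $\|\bJ_\ell\|_{L^4(\ell)}|\ell|^{5/4}\|\phi\|_{W^{2,4/3}(\omega_T)}$. The divergence term is bounded by
\[
\|\pi\|_{L^4}\|\div\bu_h\|_{L^{4/3}}\lesssim\|\pi\|_{W^{1,4/3}}|T|^{1/2}\|\div\bu_h\|_{L^4(T)},
\]
using $|T|^{1/2}\sim h_T$ locally. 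Summing with a discrete Hölder inequality, with exponents $4$ on the residual factors and $4/3$ on the norms of $\phi$, and using finite overlap of the patches $\omega_T$, we get the bound
\[
\bigl(\eta_\Omega+h^2\|\bf-\bf_h\|_{L^4}\bigr)\bigl(\|\phi\|_{W^{2,4/3}}+\|\pi\|_{W^{1,4/3}}\bigr)\lesssim\bigl(\eta_\Omega+h^2\|\bf-\bf_h\|_{L^4}\bigr)\|\be\|^3_{L^4}.
\]

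The main obstacle is the nonlinear term $N$, which must be absorbed into the left-hand side. By Lemma \ref{prop_oprerador_no_lineal}(a),(b), and since $\phi\in\bW^{1,4}$ by Remark \ref{inclusion}, we can write
\[
\Bigl|\int_\O(\be\cdot\nabla)\bv\cdot\phi\Bigr|\le\|\be\|_{L^4}\|\nabla\bv\|_{L^2}\|\phi\|_{L^4}.
\]
For the second piece we integrate by parts, so that $\int(\bu_h\cdot\nabla)\be\cdot\phi=-\int(\bu_h\cdot\nabla)\phi\cdot\be-\int\div\bu_h\,(\be\cdot\phi)$, which is bounded by $\|\bu_h\|_{L^4}\|\nabla\phi\|_{L^2}\|\be\|_{L^4}$ plus a divergence contribution. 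That contribution is handled in the same way as the $\pi$ term, or it is absorbed. With $\|\phi\|_{W^{1,4}}\lesssim C_A\|\be\|^3_{L^4}$ and the a priori bounds \eqref{cota_v} and \eqref{cota-uh}, we obtain
\[
\Rey|N|\le \Rey\,C\,(\|\bv\|_{H^1}+\|\bu_h\|_{H^1})\|\be\|^4_{L^4}.
\]
If $\Rey$ is small enough that this constant is below $1/2$, the term moves to the left-hand side. Dividing by $\|\be\|^3_{L^4}$ then gives the claimed estimate.
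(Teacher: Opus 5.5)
Your proof follows the paper's architecture: the $L^4$ duality with $\bF=|\bv-\bu_h|^2(\bv-\bu_h)$, the Stokes dual problem of Assumption \ref{assumption1}, the element and edge bounds via \eqref{estimate_interp_1} and Proposition \ref{estimacion_lado}, and absorption of the convective terms for small $\Rey$. The step that fails is the divergence term $\int_\O\pi\,\div\bu_h$.

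\textbf{The divergence term.} Local Hölder does give $\|\div\bu_h\|_{L^{4/3}(T)}\le |T|^{1/2}\|\div\bu_h\|_{L^4(T)}$, but you are then left with
\[
\sum_T \|\pi\|_{L^4(T)}\,h_T\|\div\bu_h\|_{L^4(T)} .
\]
To pair the second factor in $\ell^4$, as $\eta_\O$ requires, you need $\bigl(\sum_T\|\pi\|^{4/3}_{L^4(T)}\bigr)^{3/4}$. This is not bounded by $\|\pi\|_{W^{1,4/3}(\O)}$ uniformly in $h$: for smooth $\pi$ on a uniform mesh it grows like $h^{-1}$. Read globally, $\|\pi\|_{L^4(\O)}\|\div\bu_h\|_{L^{4/3}(\O)}$ fails in the same way. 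The factor $\|\div\bu_h\|_{L^{4/3}(\O)}$ is in general only bounded by $h^{-1}$ times the divergence contribution to $\eta_\O$. The mesh-size weight cannot come from Hölder's inequality; it has to come from an approximation property.

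\textbf{The missing ingredient.} Use the discrete incompressibility constraint in \eqref{NS_fem}: $\int_\O q_h\,\div\bu_h=0$ for all $q_h\in Q_h$. This also holds for constants, since $\int_\O\div\bu_h=\int_\Gamma\bg_h\cdot\bn=0$. Hence
\[
\int_\O\pi\,\div\bu_h=\sum_T\int_T(\pi-\mathcal{I}_h\pi)\,\div\bu_h\lesssim\sum_T h_T\|\div\bu_h\|_{L^4(T)}\,\|\pi\|_{W^{1,4/3}(\omega_T)} .
\]
Now the $\pi$-factor is an $L^{4/3}$-type local norm. Its $\ell^{4/3}$-sum is controlled by $\|\pi\|_{W^{1,4/3}(\O)}\lesssim\|\bv-\bu_h\|^3_{L^4(\O)}$. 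This is exactly what the paper does, by testing \eqref{ec_error} with $q_h=\mathcal{I}_h\pi$. (Relatedly, the Galerkin test function must be $I_h\phi\in\bV_{h,0}$, not $\phi-I_h\phi$.)

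\textbf{The convective term.} Your splitting
\[
(\bv\cdot\nabla)\bv-(\bu_h\cdot\nabla)\bu_h=((\bv-\bu_h)\cdot\nabla)\bv+(\bu_h\cdot\nabla)(\bv-\bu_h)
\]
is correct, but it places the non-solenoidal $\bu_h$ in front of the error gradient. Integrating by parts then produces $\int_\O\div\bu_h\,(\bv-\bu_h)\cdot\phi$. Handling this ``like the $\pi$ term'' does not work, because no orthogonality applies to $(\bv-\bu_h)\cdot\phi$. It can, however, be absorbed, since
\[
\Bigl|\int_\O\div\bu_h\,(\bv-\bu_h)\cdot\phi\Bigr|\le\sqrt{2}\,\|\nabla\bu_h\|_{L^2(\O)}\|\bv-\bu_h\|_{L^4(\O)}\|\phi\|_{L^4(\O)} .
\]
The paper avoids this extra term by splitting as $(\bv\cdot\nabla)(\bv-\bu_h)+((\bv-\bu_h)\cdot\nabla)\bu_h$. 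There Lemma \ref{prop_oprerador_no_lineal}(b) applies directly because $\div\bv=0$.
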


\begin{proof}
    Taking Assumption 1 into account, consider $\bF= |\bv-\bu_h|^2(\bv-\bu_h)$. Since $\bH^1(\O)\hookrightarrow \bL^4(\O)$ \cite{Adams1975,Grisvard1985}, we conclude that $\bv-\bu_h\in\bL^4(\O)$ and, moreover, by a previous lemma, we have $\bF\in \bL^{4/3}(\O)$ and $\|\bF\|_{L^{4/3}(\O)}=\|\bv-\bu_h\|^3_{L^4(\O)}$. 

    First, note that subtracting \eqref{NSweak_regular_data} and \eqref{NS_fem}, for every $\bv_h\in \bV_{h,0}$
  and every $q_h\in Q_h$, it follows that
    \begin{equation}\label{ec_error} 
    \left\{\begin{array}{r} \int_\O\nabla(\bv-\bu_h):\nabla\bv_h-\int_\O\div\bv_h(q-p_h)
    +\Rey\,\int_\O(\bv\cdot\nabla)\bv\cdot\bv_h\\ -
    \Rey\,\int_\O(\bu_h\cdot\nabla)\bu_h\cdot\bv_h=0 \\
    \\
    \int_\O \div(\bv-\bu_h) q_h =0 \end{array}\right. 
    \end{equation}
   
    Let $\phi$ and $\pi$ be as in Assumption 1. Since $\bv-\bu_h \in \bH^1_0(\O)$, integrating by parts and using that $\div \phi =0$, we obtain
    \begin{align*}
        \|\bv-\bu_h\|^4_{L^4(\O)} &= \int_\O |\bv-\bu_h|^2(\bv-\bu_h)\cdot (\bv-\bu_h) = \int_\O (\bv-\bu_h)\cdot(-\Delta\phi + \nabla\pi)\\
        &=\int_\O\nabla(\bv-\bu_h):\nabla \phi -\int_\O \div(\bv-\bu_h)\pi-\int_\O\div\phi(q-p_h)  = I
    \end{align*}
    Let $\mathbf{I}_h: \bW^{2,\frac43}(\O)\to \bV_{h,0}$ and $\mathcal{I}_h:\bW^{1,\frac43}(\O)\to Q_h$ be interpolators such that $\mathbf{I}_h (\bv) = (I_h v_1, I_h v_2)$, with $I_h$ and $\mathcal{I}_h$  satisfying \eqref{estimate_interp_1}. Using equation \eqref{ec_error} with $\bv_h = \mathbf{I}_h \phi $ and $q_h= \mathcal{I}_h \pi $, subtracting the remaining terms and integrating by parts on each triangle, we obtain
    \begin{align*}    
        I &=\sum_{T\in\matT_h}\left[\int_T \left(\Delta \bu_h -\nabla p_h-\Rey\,(\bu_h\cdot\nabla)\bu_h \right)\cdot(\phi-\bI_h\phi)\right.\\
        &\left.+ \int_T\div\bu_h (\pi-\mathcal{I}_h\pi) +\int_{\partial T} \left(\frac{\partial\bu_h}{\partial\bn}\right)\cdot(\phi-\bI_h\phi)\right]\\
        &-\Rey\,\int_\O (\bv\cdot\nabla)\bv\cdot\phi+\Rey\,\int_\O (\bu_h\cdot\nabla)\bu_h\cdot\phi \\
        &+ \int_\O \nabla \bv : \nabla (\phi-\bI_h\phi) + \Rey\,\int_{\Omega}(\bv\cdot\nabla)\bv (\phi-\bI_h\phi) - \int_{\O} q \div(\phi-\bI_h\phi)
    \end{align*}
        Here we used that $ \sum_{T\in\matT_h}\int_{\partial T} p_h \bn  (\phi - \mathbf{I}_h \phi) =0 $,
        because the normal components of $p_h$ are continuous and $\phi - \mathbf{I}_h \phi$ is continuous, since we are assuming that the functions in $Q_h$ and $\bV_h$ are continuous and $\phi \in \bW^{2,\frac43}(\O)$ is also continuous. We also used that $\int_{\O} \div \bv (\pi-\mathcal{I}_h\pi)=0$ because $\pi-\mathcal{I}_h\pi \in L^2_0(\O)$, and that $\bv-\bu_h$ and $\phi$ vanish on the boundary.
        We add and subtract $ \int_{\Omega} (\bv\cdot\nabla)\bu_h\phi$ and use equation \eqref{NSweak_regular_data}
        together with the approximation $\bf_h$ of $\bf$.
        \begin{align*}      
        I&=\sum_{T\in\matT_h}\left[\int_T (\bf_h +\Delta\bu_h-\Rey\,(\bu_h\cdot\nabla)\bu_h-\nabla p_h)\cdot (\phi-\bI_h\phi)+\int_T (\bf-\bf_h)(\phi-\bI_h\phi)\right.\\ 
       &\left.+ \int_T\div(\bu_h)(\pi-\mathcal{I}_h\pi) +\int_{\partial T} \left(\frac{\partial\bu_h}{\partial\bn}\right)\cdot(\phi-\bI_h\phi)\right]\\
       &+\Rey\,\int_\O (\bv\cdot\nabla)(\bu_h-\bv)\cdot\phi+\Rey\,\int_\O ((\bu_h-\bv)\cdot\nabla)\bu_h\cdot\phi\\
       &\leq \sum_{T\in\matT_h}\left(\|\bf_h +\Delta\bu_h-\Rey\,(\bu_h\cdot\nabla)\bu_h-\nabla p_h\|_{L^4(T)}\|\phi-\bI_h\phi\|_{L^{4/3}(T)}\right.\\
       &\left.+\|\div(\bu_h)\|_{L^4(T)}\|\pi-\mathcal{I}_h\pi\|_{L^{4/3}(T)}+\|\bf-\bf_h\|_{L^4(T)}\|\phi-\bI_h\phi\|_{L^{4/3}(T)}\right.\\
        & \left. + \frac12 \sum_{\ell\in\matE_T}\left\|\left[\frac{\partial\bu_h}{\partial\bn}\right]_\ell\right\|_{L^4(\ell)}\|\phi-\bI_h\phi\|_{L^{4/3}(\ell)} \right)+\Rey\,\|\bv\|_{L^4(\O)}\|\nabla\phi\|_{L^2(\O)}\|\bv-\bu_h\|_{L^4(\O)}\\
        &+\Rey\,\|\bv-\bu_h\|_{L^4(\O)}\|\nabla\bu_h\|_{L^2(\O)}\|\phi\|_{L^4(\O)}
    \end{align*}
    
    Using \eqref{estimate_interp_1} and \eqref{estimate_interp_2}, taking into account \eqref{inmersion} and Remark \ref{inclusion}, we obtain
    \begin{align*}
        I &\lesssim \sum_{T\in\matT_h}\left(h^2_T\|\mathcal{R}_T\|_{L^4(T)}\|\phi\|_{W^{2,4/3}(\omega_T)}+h_T\|\div(\bu_h)\|_{L^4(T)} \|\pi\|_{W^{1,4/3}(\omega_T) }\right.\\
        &\left. + \sum_{\ell\in\matE_T}|\ell|^{5/4}\left\|\bJ_\ell\right\|_{L^4(\ell)}\|\phi\|_{W^{2,4/3}(\omega_T)} \right)+\sum_{T\in\matT_h} h^2_T\|\bf-\bf_h\|_{L^4(T)}\|\phi\|_{W^{2,4/3}(\omega_T)}\\
        & +\Rey\,\|\bv\|_{L^4(\O)}\|\bv-\bu_h\|_{L^4(\O)}\|\phi\|_{H^1(\O)}+ \Rey\,\|\bu_h\|_{H^1(\O)}\|\bv-\bu_h\|_{L^4(\O)}\|\phi\|_{L^{4}(\O)}\\
        &  \lesssim \sum_{T\in\matT_h}\left(h^2_T\|\mathcal{R}_T\|_{L^4(T)}+h_T\|\div(\bu_h)\|_{L^4(T)}+ \sum_{\ell\in\matE_T}|\ell|^{5/4}\left\|\bJ_\ell\right\|_{L^4(\ell)} \right) \left( \|\phi\|_{W^{2,4/3}(\omega_T)} \right.\\ 
        &  \left. +\|\pi\|_{W^{1,4/3}(\omega_T)} \right) +\sum_{T\in\matT_h} h^2_T\|\bf-\bf_h\|_{L^4(T)}\|\phi\|_{W^{2,4/3}(\omega_T)}\\  &+\Rey\,\|\bv\|_{L^4(\O)}\|\bv-\bu_h\|_{L^4(\O)}\|\phi\|_{H^1(\O)} + \Rey\,\|\bu_h\|_{H^1(\O)}\|\bv-\bu_h\|_{L^4(\O)}\|\phi\|_{L^{4}(\O)}
    \end{align*}
Now, by Holder inequality  $\sum_{j=1}^m|a_jb_j|\leq\left(\sum_{j=1}^ma_j^p\right)^{1/p}\left(\sum_{j=1}^mb_j^q\right)^{1/q}$,  with $\frac1p+\frac1q=1$, we can write  

\begin{align*}
 I&\lesssim  \left\{ \sum_{T\in\matT_h} \left( h_T^2\|\mathcal{R}_T\|_{L^4(T)}+h_T\|\div\bu_h\|_{L^4(T)}+\sum_{\ell\in\matE_T}|\ell|^{5/4}\left\|\bJ_\ell\right\|_{L^4(\ell)}\right)^4 \right\}^{1/4}\\ 
&\left\{ \sum_{T\in\matT_h} \left( \|\phi\|_{W^{2,4/3}(\omega_T)} \right.  \left. +\|\pi\|_{W^{1,4/3}(\omega_T)}\right)^{4/3}\right\}^{3/4}+\sum_{T\in\matT_h} h^2_T\|\bf-\bf_h\|_{L^4(T)}\|\phi\|_{W^{2,4/3}(\omega_T)}\\
 & +\Rey\,\|\bv\|_{L^4(\O)}\|\bv-\bu_h\|_{L^4(\O)}\|\phi\|_{H^1(\O)}+ \Rey\,\|\bu_h\|_{H^1(\O)}\|\bv-\bu_h\|_{L^4(\O)}\|\phi\|_{L^{4}(\O)}\\
     &\lesssim  \left\{ \sum_{T\in\matT_h} \left( h_T^8\|\mathcal{R}_T\|^4_{L^4(T)}+h_T^4\|\div\bu_h\|^4_{L^4(T)}+\sum_{\ell\in\matE_T}|\ell|^{5}\left\|\bJ_\ell\right\|^4_{L^4(\ell)}\right) \right\}^{1/4}\\
&\left\{ \sum_{T\in\matT_h} \left( \|\phi\|^{4/3}_{W^{2,4/3}(\omega_T)} \right.\left. +\|\pi\|^{4/3}_{W^{1,4/3}(\omega_T)}+\|\phi\|^{4/3}_{W^{2,4/3}(\omega_T) }\right)\right\}^{3/4}\\
&+\sum_{T\in\matT_h} h^2_T\|\bf-\bf_h\|_{L^4(T)}\|\phi\|_{W^{2,4/3}(\omega_T)}\\
        & +\Rey\,\|\bv\|_{L^4(\O)}\|\bv-\bu_h\|_{L^4(\O)}\|\phi\|_{H^1(\O)}+ \Rey\,\|\bu_h\|_{H^1(\O)}\|\bv-\bu_h\|_{L^4(\O)}\|\phi\|_{L^{4}(\O)}
    \end{align*}
   
 Then, by using that 
the triangulation satisfies the minimum angle condition,  we have  
    
    $$\sum_{T\in\matT_h}\|\phi\|^{\frac43}_{W^{2,\frac43}(\omega_T)} \lesssim \|\phi\|^{\frac43}_{W^{2,\frac43}(\Omega)},
    \quad  \sum_{T\in\matT_h}\|\pi\|^{\frac43}_{W^{1,\frac43}(\omega_T)} \lesssim \|\pi\|^{\frac43}_{W^{2,\frac43}(\Omega)},$$
and by using this join with the a priori estimate
 $$\|\phi\|_{W^{2,4/3}(\O)}+\|\pi\|_{W^{1,4/3}(\O)}\lesssim \|\bF\|_{L^{4/3}(\O)}=\|\bv-\bu_h\|^3_{L^4(\O)}$$
 we get    
  \begin{align*}
        I&\lesssim  \left\{ \sum_{T\in\matT_h} \eta_T^4 \right\}^{1/4} \|\bv-\bu_h\|^3_{L^4(\O)}
        +  h^2 \|\bf-\bf_h\|_{L^4(\Omega)}
        \|\bv-\bu_h\|^3_{L^4(\O)}\\
        & +\Rey\,\|\bv\|_{L^4(\O)}\|\bv-\bu_h\|_{L^4(\O)}\|\phi\|_{H^1(\O)}+ \Rey\,\|\bu_h\|_{H^1(\O)}\|\bv-\bu_h\|_{L^4(\O)}\|\phi\|_{L^{4}(\O)}
    \end{align*}
    Since
    $$\|\phi\|_{H^1(\Omega)},\,\|\phi\|_{L^4(\Omega)} \lesssim \|\phi\|_{W^{2,4/3}(\Omega)},$$
    combining this with the a priori estimate
    $$\|\phi\|_{W^{2,4/3}(\Omega)} + \|\pi\|_{W^{1,4/3}(\Omega)} 
    \lesssim \|\mathbf{F}\|_{L^{4/3}(\Omega)}
    = \|\bv-\mathbf{u}_h\|_{L^4(\Omega)}^{3},$$
    we obtain
    \begin{align*}
        I&\lesssim  \left\{ \sum_{T\in\matT_h} \eta_T^4 \right\}^{1/4} \|\bv-\bu_h\|^3_{L^4(\O)}
        +  h^2 \|\bf-\bf_h\|_{L^4(\Omega)}
        \|\bv-\bu_h\|^3_{L^4(\O)}\\
        & +\Rey\,(\|\bv\|_{H^1(\O)}+\|\bu_h\|_{H^1(\O)})\|\bv-\bu_h\|^4_{L^4(\O)}
    \end{align*}
    
    From   \eqref{cota_v}  and   \eqref{cota-uh} 
    we know that there exist constants $M_v$ and $M_u$, depending on the data and the Reynolds number $\Rey$,  such that $\|\bv\|_{H^1(\O)} \leq M_v$ and $\|\bu_h\|_{H^1(\O)} \leq M_u$. Therefore, assuming that $\Rey$ is small enough, it follows that

    \begin{align*}
        \|\bv-\bu_h\|_{L^4(\O)} &\lesssim \left( \sum_{T\in\matT_h} \eta_T^4 \right)^{1/4}+h^2\|\bf-\bf_h\|_{L^4(\O)},
    \end{align*}
   \qed
\end{proof}

Now the goal is to prove the efficiency of the estimator.

For $T \in \matT_h$, we define the bubble function $b_T$ by
\[
b_T :=
\begin{cases}
27\lambda_1^T \lambda_2^T \lambda_3^T  & \text{in } T, \\[6pt]
0  & \text{in } \Omega \setminus T,
\end{cases}
\]
where $\lambda_1^T$, $\lambda_2^T$ and $\lambda_3^T$ denote the barycentric coordinates of $T$.

To demonstrate properties of bubble functions, we will use the fact that for any $T\in\matT_h$ 
 $$\int_T \lambda_{1,T}^{n_1}\lambda_{2,T}^{n_2} \lambda_{3,T}^{n_3} dx = \frac{n_1! n_2! n_3!2!}{(n_1+n_2+n_3+ 2)!}|T|.$$

\begin{lemma}\label{propiedades_burbuja}
Let $T\in\matT_h$. Then $b_T$ satisfies:
\begin{itemize}
   \item[(a)] $0\leq b_T\leq 1$. 
   \item[(b)] $\|b_T\|_{L^p(T)}\lesssim |T|^{1/p}, \quad for 1\leq p < \infty$
  \item[(c)] For any $w \in P_k(T) $ and $\alpha >0$, 
  $
\|w\|_{L^p(T)}\lesssim \|wb_T^{\frac{\alpha}{p}}\|_{L^p(T)}.
$

\end{itemize}
\end{lemma}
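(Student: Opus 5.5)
All three properties follow from the explicit form of $b_T$ and an affine change of variables to the reference triangle $\hat T$. Let $F_T:\hat T\to T$ be the affine map, and let $\hat\lambda_i$ be the barycentric coordinates on $\hat T$. Then $b_T\circ F_T = \hat b := 27\hat\lambda_1\hat\lambda_2\hat\lambda_3$, and this function does not depend on $T$.

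For (a), nonnegativity is immediate, since each $\lambda_i^T\ge 0$ on $T$ and $b_T=0$ outside $T$. For the upper bound, the constraint $\lambda_1+\lambda_2+\lambda_3=1$ together with the AM--GM inequality gives
\[
\lambda_1\lambda_2\lambda_3\le \left(\frac{\lambda_1+\lambda_2+\lambda_3}{3}\right)^3=\frac1{27}.
\]
Hence $b_T\le 1$, with equality exactly at the barycenter.

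For (b), there are two routes. The quick one uses (a): $\|b_T\|_{L^p(T)}^p=\int_T b_T^p\le |T|$. Alternatively, for integer $p$ one can expand $b_T^p=27^p(\lambda_1\lambda_2\lambda_3)^p$ and apply the quoted integral formula with $n_1=n_2=n_3=p$, which yields the sharper identity $\|b_T\|^p_{L^p(T)}=27^p\,\frac{(p!)^3\,2}{(3p+2)!}\,|T|$. Either way we get $\|b_T\|_{L^p(T)}\lesssim |T|^{1/p}$.

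For (c), I would use a norm-equivalence argument on the finite-dimensional space $P_k(\hat T)$. Define
\[
\hat w\mapsto \Big(\int_{\hat T}|\hat w|^p\,\hat b^{\alpha}\Big)^{1/p}.
\]
This is a norm on $P_k(\hat T)$: it is clearly a seminorm, and if it vanishes then $\hat w=0$ on the interior of $\hat T$, because $\hat b>0$ there; a polynomial vanishing on an open set is identically zero. All norms on a finite-dimensional space are equivalent, so there is a constant $C=C(k,p,\alpha)$ with $\|\hat w\|_{L^p(\hat T)}\le C\|\hat w\hat b^{\alpha/p}\|_{L^p(\hat T)}$. Transferring back to $T$, both sides pick up the same Jacobian factor $|\det DF_T|^{1/p}$, so the constant is independent of $T$; shape regularity is not even needed for this step.

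The only step with any subtlety is (c), namely checking definiteness for non-integer $\alpha$ and confirming that $C$ depends only on $k$, $p$ and $\alpha$. The affine scaling argument above settles both points.
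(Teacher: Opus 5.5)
Your proposal is correct and follows essentially the same route as the paper. For (c) the paper likewise uses the equivalence of $\|\hat w\,\hat b^{\alpha/p}\|_{L^p(\hat T)}$ and $\|\hat w\|_{L^p(\hat T)}$ on $P_k(\hat T)$ together with an affine change of variables, and the minor differences are that you justify (a) via AM--GM and the definiteness in (c) explicitly where the paper leaves them implicit, and that for (b) you bound $\int_T b_T^p\le |T|$ directly from (a) instead of computing $\int_T b_T^p=2|T|\int_{\hat T}\hat b^p$ by change of variables.
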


\begin{proof}
Property (a) follows directly from the definition. For $T\in\mathcal{T}_h$ we denote by $\hat{T}$ a reference triangle and by $B$ the matrix associated with the affine mapping $F: \hat{T} \rightarrow T$, $F (\hat{T}) = T$, the bubble function on the reference triangle is denoted by
$\hat{b}_{\hat{T}} $.

\begin{align*}
    \|b_T\|_{L^p(T)}^p 
    &= \int_{T} b_T^p 
    = \int_{\hat{T}} \hat{b}_{\hat{T}}^p |\det B| = 2|T| \int_{\hat{T}} \hat{b}_{\hat{T}}^p,
\end{align*}
and (b) holds.

Now, we will prove (c). For any $w\in P_k(T)$, we take $\hat{w}\in P_k(\hat{T})$ as $\hat{w}= w \circ F$. Let as define 
$|||\hat{w}|||_{L^p(\hat{T})}=\|\hat{w}b_{\hat{T}}^{\frac{\alpha}{p}}\|_{L^p(\hat{T})}$, with $\alpha>0$. Since in finite dimension all norms are equivalent, there exist constants $\hat{c}_1$ and $\hat{c}_2$ (depending on $\hat{T}$ and $k$) such that
$$
\hat{c}_1\|\hat{w}\|_{L^p(\hat{T})}\leq|||\hat{w}|||_{L^p(\hat{T})}\leq\hat{c}_2\|\hat{w}\|_{L^p(\hat{T})}.
$$
Taking this into account and performing a change of variables we obtain
$$
\|w\|_{L^p(T)}=|\det B|^{1/p}\|\hat{w}\|_{L^p(\hat{T})}
\leq |\det B|^{1/p} \frac{1}{\hat{c}_1}||| \hat{w}|||_{L^p(\hat{T})}  =
|\det B|^{1/p} \frac{1}{\hat{c}_1}\|\hat{w}b_{\hat{T}}^{\frac{\alpha}{p}}\|_{L^p(\hat{T})} ,
$$
and changing variables again we get
$$
\|w\|_{L^p(T)}\lesssim \|wb_T^{\frac{\alpha}{p}}\|_{L^p(T)}.
$$
\qed
\end{proof}

For our analysis, we recall the following inverse estimate (see \cite[Lemma 4.5.3]{BS1994}). Let $P$ be a finite-dimensional subspace of $W^{l,p}(T)\cap W^{m,q}(T)$, with $1\leq p,q<\infty$, $T\in\mathcal{T}_h$, and $0\leq m \leq l$. Then there exists a positive constant $C$, independent of $h_T$, such that, for all $v\in P$, 
\begin{equation}\label{inverse_ineq}
\|v\|_{W^{l,p}(T)} \leq C\, h_T^{m-l+2(1/p-1/q)} \|v\|_{W^{m,q}(T)}.
\end{equation}

\begin{lemma}\label{Cota_div}
 Let $\bv$ and $\bu_h$ be solutions of \eqref{NSweak_regular_data} and \eqref{NS_fem}, respectively. Then, 
\[
h_T\|\div\bu_h\|_{L^4(T)}\lesssim \|\bv-\bu_h\|_{L^4(T)}.
\]
\end{lemma}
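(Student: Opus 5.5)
We use that $\div\bv=0$ in $\O$, so on each $T$ we have $\div\bu_h = -\div(\bv-\bu_h)$. Since $\div\bu_h|_T$ is a polynomial of degree one, the idea is to test it against a bubble-weighted version of itself, integrate by parts to move the derivative onto the test function, and then apply the inverse estimate \eqref{inverse_ineq}.

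Set $w:=\div\bu_h|_T\in P_1(T)$. By Lemma \ref{propiedades_burbuja}(c) with $p=4$ and, for instance, $\alpha=4$, we have
\[
\|w\|_{L^4(T)}^4\lesssim \int_T |w|^4 b_T^{4}.
\]
Put $z:=|w|^2 w\, b_T^{4}$. Since $b_T$ vanishes on $\partial T$, the function $z$ vanishes there together with its first derivatives. Because $w$ is a polynomial, $|w|^2w=w^3\in P_3(T)$, so $z\in P_{15}(T)$ is a genuine polynomial.

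Next we write
\[
\int_T w^4 b_T^4=\int_T \div\bu_h\, z=-\int_T \div(\bv-\bu_h)\,z=\int_T (\bv-\bu_h)\cdot\nabla z,
\]
where the last equality is integration by parts on $T$ with no boundary term, since $z|_{\partial T}=0$. The integration by parts is justified because $\bv-\bu_h\in\bH^1(T)$.

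Hölder's inequality then gives
\[
\int_T w^4b_T^4\le \|\bv-\bu_h\|_{L^4(T)}\,\|\nabla z\|_{L^{4/3}(T)}.
\]
The inverse estimate \eqref{inverse_ineq} with $l=1$, $m=0$, $p=q=4/3$ yields $\|\nabla z\|_{L^{4/3}(T)}\lesssim h_T^{-1}\|z\|_{L^{4/3}(T)}$. Since $0\le b_T\le 1$, we have $|z|\le |w|^3$, and hence
\[
\|z\|_{L^{4/3}(T)}\le \Big(\int_T |w|^4\Big)^{3/4}=\|w\|_{L^4(T)}^3.
\]

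Combining these estimates,
\[
\|w\|_{L^4(T)}^4\lesssim h_T^{-1}\|\bv-\bu_h\|_{L^4(T)}\|w\|_{L^4(T)}^3 .
\]
Dividing by $\|w\|^3_{L^4(T)}$ (the case $w=0$ is trivial) and multiplying by $h_T$ gives the claim. The constants depend only on shape regularity and the polynomial degree, through the reference-element arguments of Lemma \ref{propiedades_burbuja} and \eqref{inverse_ineq}.

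The main point needing care is to choose the test function $z$ so that it stays polynomial, which is needed for the inverse estimate and the norm equivalence. This is why we use $w^3$ rather than $|w|^2w$ written abstractly, and the exponent $\alpha=4$ makes $b_T^{\alpha}$ a polynomial power. The other point is that the boundary term vanishes, which is guaranteed by $b_T|_{\partial T}=0$.
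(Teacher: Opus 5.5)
Your proof is correct and follows the paper's argument. You test $\div\bu_h$ against a bubble-weighted cube of itself and use $\div\bv=0$ to integrate by parts onto $\bv-\bu_h$; then Hölder, the inverse estimate \eqref{inverse_ineq}, and $\|z\|_{L^{4/3}(T)}\le\|\div\bu_h\|^3_{L^4(T)}$ conclude the bound. The only difference is that you use $(\div\bu_h)^3b_T^4$ (i.e.\ $\alpha=4$) where the paper uses $q_T=(\div\bu_h)^3b_T$ (i.e.\ $\alpha=1$); this choice is immaterial, and your sign bookkeeping in the integration by parts is actually cleaner than the paper's.
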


\begin{proof}
Let $q_T=(\div \bu_h)^3b_T$. 

Using  item (c) of Lemma \ref{propiedades_burbuja} and the fact that $\div \bv = 0$ (since by the compatibility condition and the divergence theorem one has $\div\bv\in L^2_0(\O)$ and using it as a test function yields the result), we obtain
\begin{align*}
\|\div\bu_h\|^4_{L^4(T)}
&\lesssim\|\div\bu_h b_T^{1/4}\|^4_{L^4(T)}
= \int_T(\div\bu_h)^4b_T \\
&=\int_T\div\bu_h q_T
=\int_T\div(\bu_h-\bv) q_T = \int_T (\bu_h-\bv)\nabla q_T.
\end{align*}
Thus,

$$\|\div\bu_h\|^4_{L^4(T)} \lesssim \|\bu_h-\bv\|_{L^4(T)}\|\nabla q_T\|_{L^{4/3}(T)}\\
\lesssim  \|\bu_h-\bv\|_{L^4(T)}h^{-1}_T\|q_T\|_{L^{4/3}(T)},
$$
where in the last inequality we used \eqref{inverse_ineq}. Then, 
\begin{align*}
\|q_T\|_{L^{4/3}(T)}
&=\|(\div\bu_h)^3b_T\|_{L^{4/3}(T)}
= \left(\int_T(\div\bu_h )^4b_T^{4/3}\right)^{3/4}\\
&\leq \left(\int_T(\div\bu_h)^4\right)^{3/4}
=\|\div\bu_h\|^3_{L^4(T)},
\end{align*}

Therefore
\[
\|\div\bu_h\|^4_{L^4(T)}
\lesssim \|\bu_h-\bv\|_{L^4(T)}h^{-1}_T\|\div\bu_h\|^3_{L^4(T)},
\]
which implies
\[
h_T\|\div\bu_h\|_{L^{4}(T)}\lesssim\|\bu_h-\bv\|_{L^4(T)}.
\]
\qed
\end{proof}

\begin{lemma}\label{Cota de R_T}
Let $\bv$ and $\bu_h$ be solutions of \eqref{NSweak_regular_data} and \eqref{NS_fem}, respectively.   Under the assumption that $\Rey$ is small enough, we have 
\[ {
h_T^2\|R_T\|_{L^4(T)}
}\lesssim 
\|\bv-\bu_h\|_{L^4(T)}
+
\|q-p_h\|_{W^{-1,4}(T)}
+
h_T^2\|\bf-\bf_h\|_{L^4(T)}
.
\]

\end{lemma}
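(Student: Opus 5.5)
I will follow the standard Verfürth bubble-function argument, adapted to the $L^4$/$L^{4/3}$ duality used in Lemma \ref{Cota_div}. Set $\bR_h := \bf_h + \Delta\bu_h - \nabla p_h - \Rey\,(\bu_h\cdot\nabla)\bu_h$ on $T$. This is $\mathcal{R}_T$; it is a polynomial on $T$ provided $\bf_h$ is piecewise polynomial, which I will assume, as is implicit in the use of Lemma \ref{propiedades_burbuja}(c). Define the test function
\[
\bw_T := |\bR_h|^2\bR_h\, b_T^{\,3},
\]
taken componentwise so that it is polynomial, for instance $(R_{h,i}^3 b_T^3)_i$. It vanishes on $\partial T$ and belongs to $\bH^1_0(T)$. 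The cube of $b_T$ is chosen so that inverse estimates and $W^{2,4/3}$-type bounds are available.

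By Lemma \ref{propiedades_burbuja}(c),
\[
\|\bR_h\|_{L^4(T)}^4 \lesssim \int_T \bR_h\cdot\bw_T .
\]
Add and subtract the exact operator. Since $-\Delta\bv+\Rey(\bv\cdot\nabla)\bv+\nabla q=\bf$ holds in the distributional sense on $T$, and $\bw_T\in\bH^1_0(T)$, the identity becomes
\begin{align*}
\int_T \bR_h\cdot\bw_T ={}& \int_T(\bf_h-\bf)\cdot\bw_T + \int_T \Delta(\bu_h-\bv)\cdot \bw_T - \langle \nabla(p_h-q),\bw_T\rangle\\
&- \Rey\int_T\big[(\bu_h\cdot\nabla)\bu_h-(\bv\cdot\nabla)\bv\big]\cdot\bw_T .
\end{align*}
The key choice is to integrate by parts twice in the Laplacian term, giving $\int_T(\bu_h-\bv)\cdot\Delta\bw_T$. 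This is legitimate: $\bw_T$ and $\nabla\bw_T$ vanish on $\partial T$ because of $b_T^3$. The resulting bound is $\|\bv-\bu_h\|_{L^4(T)}\|\Delta\bw_T\|_{L^{4/3}(T)}$. For the pressure term, write $\langle\nabla(p_h-q),\bw_T\rangle = -\int_T (p_h-q)\div\bw_T$ and bound it by $\|q-p_h\|_{W^{-1,4}(T)}\|\div\bw_T\|_{W^{1,4/3}_0(T)}$, which is exactly the norm appearing in the statement. The data term is bounded by $\|\bf-\bf_h\|_{L^4(T)}\|\bw_T\|_{L^{4/3}(T)}$.

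The inverse estimate \eqref{inverse_ineq}, together with $\|\bw_T\|_{L^{4/3}(T)}\le\|\bR_h\|^3_{L^4(T)}$ (the same computation as for $q_T$ in Lemma \ref{Cota_div}), gives
\[
\|\Delta\bw_T\|_{L^{4/3}}+\|\div\bw_T\|_{W^{1,4/3}}\lesssim h_T^{-2}\|\bR_h\|^3_{L^4(T)} .
\]
Dividing by $\|\bR_h\|^3_{L^4(T)}$ and multiplying by $h_T^2$ yields the three linear terms of the claim.

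The main obstacle is the nonlinear term. Write
\[
(\bu_h\cdot\nabla)\bu_h-(\bv\cdot\nabla)\bv = ((\bu_h-\bv)\cdot\nabla)\bu_h + (\bv\cdot\nabla)(\bu_h-\bv).
\]
The first piece is bounded by $\|\bu_h-\bv\|_{L^4}\|\nabla\bu_h\|_{L^2(T)}\|\bw_T\|_{L^4}$. For the second, I integrate by parts to move the derivative onto $\bw_T$. This produces $-\int_T (\bv\cdot\nabla)\bw_T\cdot(\bu_h-\bv)$ and a term containing $\div\bv$, which vanishes. Its bound is $\|\bv\|_{L^\infty(T)}\|\bu_h-\bv\|_{L^4}\|\nabla\bw_T\|_{L^{4/3}}$, with $\bv$ continuous and bounded by Assumption \ref{assumption2}. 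Inverse estimates turn $\|\bw_T\|_{L^4}$ and $\|\nabla\bw_T\|_{L^{4/3}}$ into $h_T^{-1/2}$ and $h_T^{-1}$ powers times $\|\bR_h\|^3_{L^4(T)}$; multiplied by $h_T^2$, these are $\lesssim \|\bv-\bu_h\|_{L^4(T)}$ once $h_T\le \operatorname{diam}\O$. The constants involve $\Rey\,(M_u+\|\bv\|_{L^\infty})$, which is where the hypothesis that $\Rey$ is small enough (with the bounds \eqref{cota_v}, \eqref{cota-uh}) is used to keep them uniform. If the $L^\infty$ bound on $\bv$ is unavailable, I would instead use $\|\bv\|_{L^4}\|\nabla\bw_T\|_{L^2}$ together with a local $L^4$ inverse estimate, accepting the extra $h_T$ powers, which are harmless.
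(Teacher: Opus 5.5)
Your proposal is correct and follows essentially the same route as the paper: a bubble-weighted test function (the paper uses $R_T^3 b_T^2$, which already gives $\nabla\bw_T=0$ on $\partial T$, so $b_T^3$ is not needed), two integrations by parts for the Laplacian term, the $W^{-1,4}$--$W^{1,4/3}_0$ duality for the pressure, and the same splitting and integration by parts of the convective term. For the second convective piece the paper uses exactly your fallback, $\|\bv\|_{L^4(T)}\|\nabla\bw_T\|_{L^2(T)}$ with $\|\nabla\bw_T\|_{L^2(T)}\lesssim h_T^{-3/2}\|\bw_T\|_{L^{4/3}(T)}$; that is the version to use here, since Assumption~\ref{assumption2} is not a hypothesis of this lemma and \eqref{cota_v} controls only $\|\bv\|_{H^1(\O)}$, not $\|\bv\|_{L^\infty}$ (also, $\|\bw_T\|_{L^4(T)}\lesssim h_T^{-1}\|\bw_T\|_{L^{4/3}(T)}$, not $h_T^{-1/2}$, a harmless slip).
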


\begin{proof}
Let

$$\bw_T= R_T^3 b^2_T =   (\bf_h+\Delta\bu_h-\Rey\,(\bu_h\cdot\nabla)\bu_h-\nabla p_h)^3b^2_T .$$ 

note that, 

$$\bw_T= 0 \quad \text{on} \quad \Gamma, \quad \text{and} \quad \nabla \bw_T = 0 \quad \text{on} \quad \Gamma$$

Using Lemma \ref{propiedades_burbuja} and the fact that $\bw_T \in \bH^1_0(T) \subset \bV_{h,0}$ we obtain

\begin{align*}
\| R_T\|^4_{L^4(T)}
&\lesssim
\|R_T b_T^{2/4}\|^4_{L^4(T)} =
\int_T(\bf_h+\Delta\bu_h-\Rey\,(\bu_h\cdot\nabla)\bu_h-\nabla p_h)\bw_T . \\
&=\int_T \Big(\Delta\bu_h-\Rey\,(\bu_h\cdot\nabla)\bu_h + \Rey\,(\bv\cdot\nabla)\bv
-\nabla p_h\Big)\bw_T  + \int_T \nabla \bv : \nabla \bw_T\\
& - \int_T q \, \div \bw_T +\int_T(\bf_h - \bf)\bw_T.
\end{align*}

Applying integration by parts, $\nabla {\bw_T}|_{\Gamma} = 0$, Hölder's inequality and \eqref{inverse_ineq}, we obtain

\begin{align*}
\int_T\Delta\bu_h\bw_T +  \int_T \nabla \bv : \nabla \bw_T
&=
\int_T(\bu_h-\bv)\Delta\bw_T
\leq
\|\bu_h-\bv\|_{L^4(T)}
\|\Delta\bw_T\|_{L^{4/3}(T)} \\
&\lesssim\|\bu_h-\bv\|_{L^4(T)}h_T^{-2}\|\bw_T\|_{L^{4/3}(T)} .
\end{align*}

Moreover, recalling that for $q\in W^{-1,4}(\Omega)$
 
 $$\|q\|_{W^{-1,4}(\Omega)}=\sup_{\psi\in W_0^{1,\frac43}(\Omega)\setminus\{0\}}
 \frac{\int_\Omega q\psi}{\|\psi\|_{W^{1,\frac43}(\Omega)}},$$

integrating by parts and using \eqref{inverse_ineq} again, it follows that

\begin{align*}
\int_T q \, \div \bw_T  + \int_T\nabla p_h \bw_T 
&=
\int_T(q - p_h)\div\bw_T \leq
\|q-p_h\|_{W^{-1,4}(T)}
\|\div\bw_T\|_{W^{1,4/3}(T)} \\
&\lesssim\|q-p_h\|_{W^{-1,4}(T)}h_T^{-2}\|\bw_T\|_{L^{4/3}(T)} .
\end{align*}

Finally, from  Lemma \ref{prop_oprerador_no_lineal} and \eqref{inverse_ineq} we get

\begin{align*}
\Rey\int_T\big((\bv\cdot\nabla)\bv-(\bu_h\cdot\nabla)\bu_h\big)\bw_T
&=
\Rey\int_T
\big((\bv\cdot\nabla)\bv-(\bv\cdot\nabla)\bu_h
+
(\bv\cdot\nabla)\bu_h-(\bu_h\cdot\nabla)\bu_h\big)\bw_T \\
&=
\Rey\int_T  (\bv-\bu_h)\cdot\nabla)\bu_h\cdot\bw_T
+
\Rey\int_T (\bv\cdot\nabla)(\bv-\bu_h)\cdot\bw_T \\
&=
\Rey\int_T ((\bv-\bu_h)\cdot\nabla)\bu_h\cdot\bw_T 
-
\Rey\int_T(\bv\cdot\nabla)\bw_T\cdot(\bv-\bu_h) \\
&\leq
\Rey\,\|\bv-\bu_h\|_{L^4(T)}
\|\nabla\bu_h\|_{L^2(T)}
\|\bw_T\|_{L^4(T)} \\
&+
\Rey\,\|\bv\|_{L^4(T)}
\|\nabla\bw_T\|_{L^2(T)}
\|\bv-\bu_h\|_{L^4(T)} \\
&\lesssim
\Rey\,\|\bv-\bu_h\|_{L^4(T)}
\|\nabla\bu_h\|_{L^2(T)}
h_T^{-1}\|\bw_T\|_{L^{4/3}(T)} \\
&+
\Rey\,\|\bv\|_{H^1(T)}
h_T^{-3/2}
\|\bw_T\|_{L^{4/3}(T)}
\|\bv-\bu_h\|_{L^4(T)} .
\end{align*}

Since
$$\|\bw_T\|_{L^{4/3}(T)}\leq \|\bf_h+\Delta\bu_h-\Rey\,(\bu_h\cdot\nabla)\bu_h-\nabla p_h\|^3_{L^4(T)},$$
we obtain
\begin{align*}
h_T^2\|\bf_h+\Delta\bu_h-\Rey\,(\bu_h\cdot\nabla)\bu_h-\nabla p_h\|_{L^4(T)}
&\lesssim
\|\bv-\bu_h\|_{L^4(T)}
+
\|q-p_h\|_{W^{-1,4}(T)} \\
&+
h_T\,\Rey\,\|\nabla\bu_h\|_{L^2(T)}
\|\bv-\bu_h\|_{L^4(T)} \\
&+
h_T^{1/2}\,\Rey\,
\|\bv\|_{H^1(T)}
\|\bv-\bu_h\|_{L^4(T)} \\
&+
h_T^2\|\bf-\bf_h\|_{L^4(T)}.
\end{align*}
Then, using estimates \eqref{cota_v} and \eqref{cota-uh}, together with the assumption that
$\Rey$ is sufficiently small, we conclude the proof.
\qed
\end{proof}

For $\ell \in \mathcal{E}_h$, we denote by $T_1$ and $T_2$ the two triangles sharing $\ell$, and we enumerate the vertices of $T_1$ and $T_2$ so that the vertices of $\ell$ are numbered first. We then consider the piecewise bubble function associated with the edge $\ell$, defined by
\[
b_\ell :=
\begin{cases}
4\lambda_1^{T_i}\lambda_2^{T_i}, & \text{in } T_i, \quad i=1,2, \\[6pt]
0, & \text{in } \Omega \setminus \omega_\ell,
\end{cases}
\]
with $\omega_\ell = T_1 \cup T_2$.
This bubble function satisfies
\begin{align*}
\int_\ell b_\ell &= C|\ell|,\\
\int_T b_\ell &= C|T|, \qquad T\in\omega_\ell,\\
\|b_\ell\|_{L^p(T)} &\leq C|T|^{1/p}, \qquad .
\end{align*}

\begin{lemma}\label{lema_burbuja_lado}
Let $w$ be a polynomial of degree $k$. Then there exists a constant $C$, depending only on  $ \sigma $, $k$ and $p$, such that
 for any $m \geq 1$ we get
\begin{itemize}
\item[i)] 
$
\| w\|_{L^p{(\ell)}}
\leq
C \|w b^m_{\ell}\|_{L^p(\ell)} .
$
\item[ii)] $
\|b^m_\ell w\|_{L^p{(T)}}
\leq
C |\ell|^{1/p}\|w\|_{L^p(\ell)} .
$
\end{itemize}
\end{lemma}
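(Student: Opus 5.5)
Both estimates will be proved by scaling to a reference configuration, using norm equivalence on finite-dimensional spaces, exactly as in part (c) of Lemma \ref{propiedades_burbuja}. In (ii), $w$ on $\ell$ is understood as the polynomial $w$ itself evaluated in $T$ (or, equivalently, the extension of its trace on $\ell$ constant along a fixed direction transversal to $\ell$). Fix $T\in\omega_\ell$ and the affine map $F:\hat T\to T$ sending the reference edge $\hat\ell$ (the edge where $\hat\lambda_3=0$) onto $\ell$. Then $b_\ell\circ F=\hat b_{\hat\ell}:=4\hat\lambda_1\hat\lambda_2$ and $\hat w:=w\circ F\in P_k(\hat T)$. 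On $\ell$ the change of variables is the one-dimensional Jacobian $|\ell|/|\hat\ell|$. On $T$ it is $|\det B|=2|T|$, and by shape regularity ($h_T/\rho_T\le\sigma$) this is comparable to $|\ell|^2$.

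For (i), consider on the finite-dimensional space $P_k(\hat\ell)$ the two functionals
\[
\|\hat w\|_{L^p(\hat\ell)}\quad\text{and}\quad \|\hat w\,\hat b_{\hat\ell}^m\|_{L^p(\hat\ell)}.
\]
The second is a norm because $\hat b_{\hat\ell}>0$ in the interior of $\hat\ell$, so $\hat w\,\hat b_{\hat\ell}^m=0$ forces $\hat w=0$ on $\hat\ell$. Equivalence of norms gives
\[
\|\hat w\|_{L^p(\hat\ell)}\le \hat c\,\|\hat w\,\hat b_{\hat\ell}^m\|_{L^p(\hat\ell)}.
\]
Both sides scale by the same factor $(|\ell|/|\hat\ell|)^{1/p}$, so (i) follows with a constant depending only on $k,p,m$. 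The reference edge is fixed, so $\sigma$ does not even enter here.

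For (ii), use $0\le b_\ell\le 1$, hence $b_\ell^m\le 1$, to get $\|b_\ell^m w\|_{L^p(T)}\le\|w\|_{L^p(T)}$. After mapping to $\hat T$, the map $\hat w\mapsto\|\hat w\|_{L^p(\hat T)}$ is dominated by $\|\hat w\|_{L^p(\hat\ell)}$ on the relevant finite-dimensional space. This space is the polynomials in the edge variable extended constantly in the transversal direction, on which $\|\cdot\|_{L^p(\hat\ell)}$ is a genuine norm. Scaling then gives
\[
\|w\|_{L^p(T)}\lesssim |T|^{1/p}|\ell|^{-1/p}\|w\|_{L^p(\ell)}\sim |\ell|^{1/p}\|w\|_{L^p(\ell)},
\]
where the last step uses $|T|\sim|\ell|^2$ from shape regularity. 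This is where the dependence on $\sigma$ enters. Alternatively, one can keep $b_\ell^m$ inside and argue directly with the weighted functional.

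The main obstacle is (ii): a general $w\in P_k(T)$ is not controlled by its trace on $\ell$, since $\lambda_3^T$ vanishes on $\ell$. The statement therefore only makes sense once $w$ is tied to its edge values through a fixed extension operator, and I would state this interpretation explicitly. With that in place, the uniformity of the constants reduces to the minimum angle condition, which bounds the distortion of $F$ and makes the transversal extension direction uniformly non-degenerate.
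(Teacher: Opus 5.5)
Your proposal is correct and is essentially the paper's argument, which simply invokes rescaling to the reference configuration together with equivalence of norms on finite-dimensional spaces, as in Lemma \ref{propiedades_burbuja} and \cite[Propositions 1.4 and 3.37]{verfurth2013}. Your two refinements of the statement are also right and worth keeping: first, (ii) only holds once $w$ on $T$ is the image of its edge trace under a fixed extension operator, which is exactly how the lemma is used for $\Phi_\ell=\bJ_\ell^3 b_\ell^2$ (so drop the word ``equivalently'' in your first paragraph, since $w=\lambda_3^T$ violates (ii) for a general $w\in P_k(T)$); second, the constant in (i) must depend on $m$, because $\|b_\ell^m\|_{L^p(\ell)}/|\ell|^{1/p}\to 0$ as $m\to\infty$.
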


\begin{proof}
The proof follows the same argument as in Lemma \ref{propiedades_burbuja} and in \cite[Proposition 1.4 and Proposition 3.37]{verfurth2013}, based on a rescaling argument and the equivalence of norms on the reference elements.
\qed
\end{proof}

\begin{lemma}\label{Cota_salto}
Let $\bv$ and $\bu_h$ be solutions of \eqref{NSweak_regular_data} and \eqref{NS_fem}, respectively.  Assuming that Assumption \ref{assumption2} holds and 
$\Rey$ are sufficiently small, we have 

\[
|\ell|^{5/4}\|\bJ_\ell\|_{L^4(\ell)}
\lesssim
\|\bv-\bu_h\|_{L^4(\omega_\ell)}
+
\|q-p_h\|_{W^{-1,4}(\omega_\ell)}
+
|\ell|^2\|\bf-\bf_h\|_{L^4(\omega_\ell)}  
\]

\end{lemma}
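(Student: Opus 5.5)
The plan is the standard Verfürth edge-bubble argument, carried out in $L^4/L^{4/3}$ duality exactly as in the proof of Lemma \ref{Cota de R_T}. Fix an interior edge $\ell=T_1\cap T_2$ and extend the polynomial $\bJ_\ell^3$ (componentwise cube) from $\ell$ to $\omega_\ell$ by constants along a fixed direction, giving a polynomial on each $T_i$. Then set
\[
\bw_\ell := \bJ_\ell^3\, b_\ell^2 ,
\]
which vanishes on $\partial\omega_\ell$ and so lies in $\bH^1_0(\omega_\ell)$. The squared bubble is chosen so that derivatives of $\bw_\ell$ are well behaved, as with $b_T^2$ in Lemma \ref{Cota de R_T}.

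First, item i) of Lemma \ref{lema_burbuja_lado} gives
\[
\|\bJ_\ell\|_{L^4(\ell)}^4 \lesssim \int_\ell \bJ_\ell\cdot \bw_\ell = \tfrac12\int_\ell \Big[\frac{\partial\bu_h}{\partial\bn}\Big]_\ell\cdot\bw_\ell .
\]
Next, integrate by parts on $T_1$ and $T_2$ to rewrite the jump term as
\[
\sum_{i=1,2}\Big(\int_{T_i}\nabla\bu_h:\nabla\bw_\ell+\int_{T_i}\Delta\bu_h\cdot\bw_\ell\Big).
\]
Then subtract the weak equation \eqref{NSweak_regular_data} tested with $\bw_\ell$. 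This uses Assumption \ref{assumption2}, which makes $\nabla\bv$ regular enough that the local manipulations and trace terms make sense. Inserting $\mathcal R_T$, the jump term becomes a sum of four contributions:
\begin{itemize}
\item[(i)] $-\sum_i\int_{T_i}\mathcal R_{T_i}\cdot\bw_\ell$;
\item[(ii)] a pressure term $\int_{\omega_\ell}(q-p_h)\div\bw_\ell$;
\item[(iii)] the difference of the convective terms;
\item[(iv)] the data term $\int_{\omega_\ell}(\bf-\bf_h)\cdot\bw_\ell$.
\end{itemize}
There is also the term $\int_{\omega_\ell}\nabla(\bu_h-\bv):\nabla\bw_\ell$. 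To match the $L^4$ error norm, I would integrate this by parts once more on each $T_i$, as in Lemma \ref{Cota de R_T}. This moves derivatives onto $\bw_\ell$ and produces $\int(\bu_h-\bv)\Delta\bw_\ell$ plus edge terms. Those edge terms are controlled because $\bu_h-\bv$ is continuous and $b_\ell^2$ makes the normal derivative of $\bw_\ell$ vanish on $\partial\omega_\ell$.

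Each piece is then bounded using Hölder's inequality and the inverse estimate \eqref{inverse_ineq}:
\begin{itemize}
\item the viscous term by $\|\bv-\bu_h\|_{L^4(\omega_\ell)}h^{-2}\|\bw_\ell\|_{L^{4/3}(\omega_\ell)}$;
\item the pressure term by $\|q-p_h\|_{W^{-1,4}(\omega_\ell)}h^{-2}\|\bw_\ell\|_{L^{4/3}}$;
\item the convective difference, split exactly as in Lemma \ref{Cota de R_T}, by $\Rey(h^{-1}\|\nabla\bu_h\|+h^{-3/2}\|\bv\|_{H^1})\|\bv-\bu_h\|_{L^4}\|\bw_\ell\|_{L^{4/3}}$;
\item the residual term by $\|\mathcal R_T\|_{L^4(T_i)}\|\bw_\ell\|_{L^{4/3}}$;
\item the data term by $\|\bf-\bf_h\|_{L^4}\|\bw_\ell\|_{L^{4/3}}$.
\end{itemize}
By item ii) of Lemma \ref{lema_burbuja_lado}, $\|\bw_\ell\|_{L^{4/3}(T_i)}\lesssim |\ell|^{3/4}\|\bJ_\ell\|^3_{L^4(\ell)}$. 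Dividing by $\|\bJ_\ell\|^3_{L^4(\ell)}$ and multiplying by $|\ell|^{5/4}$, and using $|\ell|\sim h_{T_i}$, gives
\[
|\ell|^{5/4}\|\bJ_\ell\|_{L^4(\ell)}\lesssim \|\bv-\bu_h\|_{L^4(\omega_\ell)}+\|q-p_h\|_{W^{-1,4}(\omega_\ell)}+\sum_i h_{T_i}^2\|\mathcal R_{T_i}\|_{L^4(T_i)}+|\ell|^2\|\bf-\bf_h\|_{L^4(\omega_\ell)} ,
\]
plus Reynolds-weighted error terms.

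The residual term is then removed by Lemma \ref{Cota de R_T}. The $\Rey$ terms are absorbed using \eqref{cota_v}, \eqref{cota-uh} and the smallness of $\Rey$; their positive powers of $h$ only help.

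The main obstacle is the viscous term. Its natural bound involves $\|\nabla(\bv-\bu_h)\|$, which is not an $L^4$ quantity. Converting it requires the second integration by parts, and that in turn requires controlling the edge contributions and the regularity of $\bv$ near $\ell$, which is where Assumption \ref{assumption2} is used. If the edge terms do not vanish cleanly, the fallback is to use a higher power of the bubble, $b_\ell^m$ with $m\ge2$, which Lemma \ref{lema_burbuja_lado} permits for any $m$, so that $\bw_\ell$ and its gradient vanish on $\partial\omega_\ell$.
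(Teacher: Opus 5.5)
Your route is the paper's route: the same test function $\bJ_\ell^3 b_\ell^2$, a second integration by parts on each $T_i$, H\"older plus inverse estimates, then Lemma \ref{Cota de R_T} and smallness of $\Rey$. However, the step you flag as the main obstacle is a genuine gap, and neither your justification nor your fallback closes it.

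Write $\bw_\ell=\mathbf{P}\,b_\ell^2$, with $\mathbf{P}$ the single-polynomial extension of $\bJ_\ell^3$. Since $\bu_h-\bv$ is continuous, the two contributions from the interior edge $\ell$ combine into
\[
\int_\ell(\bu_h-\bv)\cdot\Big[\frac{\partial\bw_\ell}{\partial\bn}\Big]_\ell
=2\int_\ell(\bu_h-\bv)\cdot\bJ_\ell^3\,b_\ell\Big[\frac{\partial b_\ell}{\partial\bn}\Big]_\ell .
\]
This does not vanish, because $b_\ell$ has a ridge along $\ell$: it is maximal there and decreases into both triangles. At the midpoint of $\ell$, with $\bn=\bn_1$ and $d_i$ the height of $T_i$ over $\ell$, one finds $\partial_{\bn}b_\ell|_{T_1}=2/d_1$ and $\partial_{\bn}b_\ell|_{T_2}=-2/d_2$, so the jump is $2/d_1+2/d_2$.

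The surviving term is bounded by $|\ell|^{-1}\|\bu_h-\bv\|_{L^4(\ell)}\|\bJ_\ell\|^3_{L^4(\ell)}$. After your division you would therefore need $|\ell|^{1/4}\|\bu_h-\bv\|_{L^4(\ell)}$ on the right. The trace inequality bounds this only by $\|\bu_h-\bv\|_{L^4(T)}+h_T\|\nabla(\bu_h-\bv)\|_{L^4(T)}$, and the gradient term does not appear in the lemma.

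Your fallback does not help. For any power, $[\partial_{\bn}(b_\ell^m)]_\ell=m\,b_\ell^{m-1}[\partial_{\bn}b_\ell]_\ell\neq 0$ in the interior of $\ell$. The vanishing of $\nabla\bw_\ell$ on $\partial\omega_\ell$ already holds for $m=2$ and was never the issue.

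The same kink breaks your pressure bound. $\div\bw_\ell$ jumps across $\ell$, so it is not in $W^{1,4/3}_0(\omega_\ell)$ and cannot be paired with $q-p_h$ in $W^{-1,4}(\omega_\ell)$. It cannot be paired elementwise in $W^{-1,4}(T_i)$ either, since it does not vanish on $\ell$.

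The paper's proof gets past this point by asserting that $\partial\Phi_\ell/\partial\bn$ is continuous. That assertion does not hold for this piecewise bubble $b_\ell$, so you cannot borrow the step from there.

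The missing ingredient is a test function that is $C^1$ across $\ell$, i.e.\ one in $W^{2,4/3}_0(\omega_\ell)$. For instance:
\begin{itemize}
\item take $\bw_\ell=\mathbf{P}\,\chi_\ell$, with $\mathbf{P}$ the extension of $\bJ_\ell^3$ that is constant along $\bn$;
\item set $\chi_\ell(x)=\chi\big((x-x_\ell)/(c|\ell|)\big)$, where $x_\ell$ is the midpoint of $\ell$ and $\chi$ is a smooth cut-off supported in the unit disk and equal to $1$ on the disk of radius $1/2$;
\item choose $c<1/2$ by shape regularity so that the disk of radius $c|\ell|$ about $x_\ell$ lies in $\omega_\ell$.
\end{itemize}
A $C^1$ macro-element edge bubble would work equally well.

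With this choice, $\int_{\omega_\ell}\nabla(\bu_h-\bv):\nabla\bw_\ell=-\int_{\omega_\ell}(\bu_h-\bv)\cdot\Delta\bw_\ell$ holds with no edge terms at all; only $\bu_h-\bv\in\bH^1$ is needed. Moreover $\div\bw_\ell\in W^{1,4/3}_0(\omega_\ell)$. Scaling, together with equivalence of norms for polynomials of fixed degree on the middle portion of $\ell$, gives $\|\bJ_\ell\|^4_{L^4(\ell)}\lesssim\int_\ell\bJ_\ell\cdot\bw_\ell$. It also gives $\|\bw_\ell\|_{L^{4/3}(\omega_\ell)}\lesssim|\ell|^{3/4}\|\bJ_\ell\|^3_{L^4(\ell)}$, with each derivative costing a factor $|\ell|^{-1}$ (up to second order). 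With this replacement, the rest of your argument goes through as written: the H\"older bounds, Lemma \ref{Cota de R_T}, and absorbing the $\Rey$-terms via \eqref{cota_v} and \eqref{cota-uh}.
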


\begin{proof}
Let $\Phi_\ell \in H^1_0(\omega_{\ell}) \subset \bV_{h,0}$ be defined as $\Phi_\ell = \bJ_\ell^3 b_\ell^2$. From Lemma \ref{lema_burbuja_lado} we have that

\begin{align*}
\|\bJ_\ell\|^4_{L^4(\ell)}
&\lesssim
\int_\ell 2 \bJ_\ell \Phi_\ell
=
\int_\ell
\left[\frac{\partial \bu_h}{\partial \bn}\right]_\ell
\Phi_\ell
= \int_{\omega_\ell}\nabla\bu_h : \nabla\Phi_\ell
+
\sum_{T\subset\omega_\ell}\int_T\Delta\bu_h \Phi_\ell
\\
&=
\int_{\omega_\ell}\nabla\bu_h : \nabla\Phi_\ell
-
\int_{\omega_\ell} p_h \div\Phi_\ell\\
&+
\sum_{T\subset\omega_\ell}
\int_T
\left[
(\Delta\bu_h-\Rey(\bu_h\cdot\nabla)\bu_h-\nabla p_h)\Phi_\ell
+
\Rey(\bu_h\cdot\nabla)\bu_h\Phi_\ell
\right]
\\
&=
\int_{\omega_\ell}\nabla\bu_h : \nabla\Phi_\ell
-
\int_{\omega_\ell} p_h\div\Phi_\ell
+
\Rey\int_{\omega_\ell} (\bu_h\cdot\nabla)\bu_h\cdot\Phi_\ell
\\
&\quad
+
\sum_{T\subset\omega_\ell}
\int_T
(\Delta\bu_h-\Rey(\bu_h\cdot\nabla)\bu_h-\nabla p_h)\Phi_\ell
+
\int_{\omega_\ell}\bf\cdot\Phi_\ell
-
\int_{\omega_\ell}\bf\cdot\Phi_\ell
\\
&=
\int_{\omega_\ell}(\nabla\bu_h-\nabla\bv):\nabla\Phi_\ell
-
\int_{\omega_\ell}(p_h-q)\div\Phi_\ell
\\
&\quad
+
\Rey\,\int_{\omega_\ell}(\bu_h\cdot\nabla)\bu_h\cdot\Phi_\ell
-
\Rey\,\int_{\omega_\ell}(\bv\cdot\nabla)\bv\cdot\Phi_\ell
\\
&\quad
+
\sum_{T\subset\omega_\ell}
\int_T
(\bf+\Delta\bu_h-\Rey(\bu_h\cdot\nabla)\bu_h-\nabla p_h)\Phi_\ell .
\end{align*}

Integrating by parts in each triangle $T\subset \omega_{\ell}$  we have
\begin{align*}
\|\bJ_\ell\|^4_{L^4(\ell)}
\lesssim \sum_{T\subset\omega_\ell}  
 &  \left\{ -\int_T(\bu_h-\bv)\Delta\Phi_\ell +
\int_{\partial T} (\bu_h-\bv)\frac{\partial\Phi_\ell}{\partial\bn}
- \int_{T}(p_h-q)\div\Phi_\ell  \right.
\\   &  \left. +
\Rey\int_{T}(\bv\cdot\nabla)(\bv-\bu_h)\cdot\Phi_\ell +
\Rey\int_{T}((\bv-\bu_h)\cdot\nabla)\bu_h\cdot\Phi_\ell  \right.
\\   & \left.
+ \int_T (\bf+\Delta\bu_h-\Rey(\bu_h\cdot\nabla)\bu_h-\nabla p_h)\Phi_\ell  \right\}
\end{align*}

We observe that, $ \sum_{T\subset\omega_\ell} 
\int_{\partial T} (\bu_h-\bv)\frac{\partial\Phi_\ell}{\partial\bn} =0$  because $\frac{\partial\Phi_\ell}{\partial\bn}$ is continuous,  
$\bv - \bu_h$ is continuous since Assumption \ref{assumption2} holds, and  $ \frac{\partial\Phi_\ell}{\partial\bn} =0$ in the rest of the edges. Adding and subtracting $\bf_h$ and applying the estimates given in Lemma \ref{prop_oprerador_no_lineal}, we obtain

\begin{align*}
\|\bJ_\ell\|^4_{L^4(\ell)}
\lesssim  &  \sum_{T\subset\omega_\ell}   \left\{
\|\bv-\bu_h\|_{L^4(T)}
    \|\Delta\Phi_\ell\|_{L^{4/3}(T)} + 
\|q-p_h\|_{W^{-1,4}(T)}
\|\div\Phi_\ell\|_{W^{1,4/3}(T)}  \right. \\
&  \left. +
\|\bf_h+\Delta\bu_h-\Rey(\bu_h\cdot\nabla)\bu_h-\nabla p_h\|_{L^4(T)}
\|\Phi_\ell\|_{L^{4/3}(T)} \right.
\\
&  \left. + \Rey\|\bv-\bu_h\|_{L^4(T)}
\|\nabla\bu_h\|_{L^2(T)}
\|\Phi_\ell\|_{L^4(T)}  \right.
\\
&  \left. +
\Rey\|\bv-\bu_h\|_{L^4(T)}
\|\Phi_\ell\|_{L^2(T)}
\|\bv\|_{L^4(T)}
+
\|\bf-\bf_h\|_{L^4(T)}
\|\Phi_\ell\|_{L^{4/3}(T)} \right\} .
\end{align*}

Now, using inverse inequalities \eqref{inverse_ineq} together with Lemma \ref{propiedades_burbuja} and the fact that $|\ell| \sim h_T$, for $T\in \omega_{\ell}$, we have
\begin{align*}
\|\Delta\Phi_\ell\|_{L^{4/3}(T)}
&\lesssim  h_T^{-2}\|\Phi_\ell\|_{L^{4/3}(T)}
\lesssim  |\ell|^{-2+\frac34}\|\bJ_\ell^3\|_{L^{4/3}(\ell)}
=
C\,|\ell|^{-\frac54}\|\bJ_\ell\|_{L^4(\ell)}^3,
\\
\|\div\Phi_\ell\|_{W^{1,4/3}(T)}
&\lesssim h_T^{-2}\|\Phi_\ell\|_{L^{4/3}(T)}
\lesssim |\ell|^{-\frac54}\|\bJ_\ell\|_{L^4(\ell)}^3,
\\
\|\Phi_\ell\|_{L^{4/3}(T)} &\lesssim \,|\ell|^{\frac34}\|\bJ_\ell\|^3_{L^4(\ell)}
\\
\|\Phi_\ell\|_{L^{4}(T)}
&\lesssim
h_T^{-1}\|\Phi_\ell\|_{L^{4/3}(T)}
\lesssim
|\ell|^{-\frac14}\|\bJ_\ell\|_{L^{4}(\ell)}^3,
\\
\|\Phi_\ell\|_{L^{2}(T)}
&\lesssim
h_T^{-\frac12}\|\Phi_\ell\|_{L^{4/3}(T)}
\lesssim
|\ell|^{\frac14}\|\bJ_\ell\|_{L^{4}(\ell)}^3 .
\end{align*}

Therefore,
\begin{align*}
|\ell|^{5/4}\|\bJ_\ell\|_{L^4(\ell)}
&\lesssim \sum_{T\subset\omega_\ell} \left\{
\|\bv-\bu_h\|_{L^4(T)} +
\|q-p_h\|_{W^{-1,4}(T)}  \right.
\\
& \left. + h_T^2
\|\bf_h+\Delta\bu_h-\Rey\,(\bu_h\cdot\nabla)\bu_h-\nabla p_h\|_{L^4(T)}  \right.
\\
& \left.  + |\ell|
\Rey\|\bv-\bu_h\|_{L^4(T)}
\|\nabla\bu_h\|_{L^2(T)}
+ |\ell|^{\frac23}
\Rey\|\bv-\bu_h\|_{L^4(T)}
\|\bv\|_{L^4(T)}  \right.
\\
& \left.
+ |\ell|^2
\|\bf-\bf_h\|_{L^4(T)}  \right\}
\end{align*}

Thus, using Lemma \ref{Cota de R_T} we conclude that

\begin{align*}
|\ell|^{5/4}\|\bJ_\ell\|_{L^4(\ell)}
&\lesssim 
\|\bv-\bu_h\|_{L^4(\omega_{\ell})}
+
\|q-p_h\|_{W^{-1,4}(\omega_{\ell})}\\
&+ |\ell|
\Rey\|\bv-\bu_h\|_{L^4(\omega_{\ell})}
\|\nabla\bu_h\|_{L^2(\omega_{\ell})}
\\
&+ |\ell|^{\frac23}
\Rey\|\bv-\bu_h\|_{L^4(\omega_{\ell})}
\|\bv\|_{L^4(\omega_{\ell})}
+ |\ell|^2
\|\bf-\bf_h\|_{L^4(\omega_{\ell})},
\end{align*}



and the proof concludes by using the apriori estimates \eqref{cota_v} and \eqref{cota-uh} together with the assumption that
the Reynolds number $\Rey$ is small enough.
\qed
\end{proof}

\begin{prop}\label{Efficiency}
(Efficiency)
Let $\bv$ and $\bu_h$ be solutions of \eqref{NSweak_regular_data} and \eqref{NS_fem}, respectively.  Assuming that Assumption \ref{assumption2} holds and  $\Rey$ is sufficiently small, we have 
\[
\eta_T
\lesssim
\|\bv-\bu_h\|_{L^4(\omega_T)}
+
\|q-p_h\|_{W^{-1,4}(\omega_T)}
+
h_T^2\|\bf-\bf_h\|_{L^4(\omega_T)} .
\]
\end{prop}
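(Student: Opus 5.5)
The plan is to assemble the three local efficiency bounds already proved and sum them over the contributions that make up $\eta_T$. By definition,
\[
\eta_T^4 = h_T^8\|\mathcal{R}_T\|_{L^4(T)}^4+\sum_{\ell\in\matE_T}|\ell|^5\|\bJ_\ell\|_{L^4(\ell)}^4+h_T^4\|\div\bu_h\|_{L^4(T)}^4 .
\]
Since $(a+b+c)^{1/4}\le a^{1/4}+b^{1/4}+c^{1/4}$ for nonnegative numbers, it suffices to bound three quantities separately:
\[
h_T^2\|\mathcal{R}_T\|_{L^4(T)},\qquad |\ell|^{5/4}\|\bJ_\ell\|_{L^4(\ell)} \ (\ell\in\matE_T),\qquad h_T\|\div\bu_h\|_{L^4(T)},
\]
each by the right-hand side of the proposition.

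\emph{Divergence term.} Lemma \ref{Cota_div} gives directly
\[
h_T\|\div\bu_h\|_{L^4(T)}\lesssim\|\bv-\bu_h\|_{L^4(T)}\le\|\bv-\bu_h\|_{L^4(\omega_T)}.
\]

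\emph{Element residual.} Lemma \ref{Cota de R_T} gives $h_T^2\|\mathcal{R}_T\|_{L^4(T)}$ bounded by $\|\bv-\bu_h\|_{L^4(T)}+\|q-p_h\|_{W^{-1,4}(T)}+h_T^2\|\bf-\bf_h\|_{L^4(T)}$, under the small-$\Rey$ hypothesis. I then enlarge $T$ to $\omega_T$ in each norm. For the $L^4$ norms this is trivial. For the $W^{-1,4}$ norm it uses that a test function in $W^{1,4/3}_0(T)$ extends by zero to an element of $W^{1,4/3}_0(\omega_T)$ with the same norm, so the local dual norm is monotone in the domain.

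\emph{Jump terms.} For each edge $\ell\in\matE_T$, Lemma \ref{Cota_salto} (using Assumption \ref{assumption2} and small $\Rey$) bounds $|\ell|^{5/4}\|\bJ_\ell\|_{L^4(\ell)}$ by the same three quantities on $\omega_\ell$, with $|\ell|^2$ in place of $h_T^2$. For an interior edge, $\omega_\ell\subset\omega_T$, since $\omega_T$ contains all triangles sharing an edge with $T$. Shape regularity gives $|\ell|\sim h_T$, so $|\ell|^2\lesssim h_T^2$. For boundary edges $\ell\in\matE_{h,\Gamma}$ I would either note that the jump is not defined there (and set $\bJ_\ell=0$, as is standard), or note that $\omega_\ell=T$ there and the same bound holds trivially. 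Since $T$ has only three edges, summing over $\matE_T$ costs only a fixed constant.

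The main obstacle is bookkeeping rather than analysis: one must make sure the constants hidden in $\lesssim$ in Lemmas \ref{Cota de R_T} and \ref{Cota_salto} are uniform in $T$ and $h$. These constants absorb $\Rey\,\|\bu_h\|_{H^1}$ and $\Rey\,\|\bv\|_{H^1}$, which are controlled by the uniform a priori bounds \eqref{cota_v} and \eqref{cota-uh}, together with powers of $h_T\le\operatorname{diam}\O$. The only other point is the monotonicity of the local $W^{-1,4}$ norm used to pass from $T$ or $\omega_\ell$ to $\omega_T$. With these checks, adding the three bounds yields the claimed local lower bound for $\eta_T$.
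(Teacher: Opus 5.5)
Your proposal is correct and is essentially the paper's proof, which simply combines Lemmas \ref{Cota_div}, \ref{Cota de R_T} and \ref{Cota_salto}. Your extra bookkeeping (subadditivity of $x\mapsto x^{1/4}$, $\omega_\ell\subset\omega_T$ with $|\ell|\sim h_T$, and monotonicity of the local $W^{-1,4}$ norm under extension by zero) makes explicit what the paper leaves implicit. One small caution: for boundary edges $\ell\subset\Gamma$, only your first option (taking $\bJ_\ell=0$) is justified. The bubble argument behind Lemma \ref{Cota_salto} needs $\Phi_\ell$ to vanish on $\Gamma$, so it would not control a one-sided normal derivative there, and the bound does not hold ``trivially'' unless the jump is zero.
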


\begin{proof}
The result follows by combining the previous lemmas \ref{Cota_div}, \ref{Cota de R_T}  and \ref{Cota_salto}.
\qed
\end{proof}
 
Now, we are in condition to prove the main result.

\begin{theorem}
  Let $\bu$ be the very weak solution of \eqref{NS_dato_g} and let $\bu_h$ be the solution of \eqref{NS_fem}.  Assuming that Assumption 1 and Assumption 2 hold and  $\Rey$ is small enough, we have 

 $$ \| \bu - \bu_h \|_{L^4(\O)} \lesssim \eta_\O +h^2\|\bf-\bf_h\|_{L^4(\O)} + \| \bg - \bg_h\|_{L^2(\O)},
 $$
 and
 $$
\eta_\O \lesssim \|\bu-\bu_h\|_{L^4(\O)}
+ \|q-p_h\|_{W^{-1,4}(\O)} +
h^2\|\bf-\bf_h\|_{L^4(\O)}  +  \| \bg - \bg_h\|_{L^2(\O)}
$$
\end{theorem}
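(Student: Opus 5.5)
The plan is to combine the two estimates for the regularized problem with the a priori bound \eqref{error:u-v} through the triangle inequality. Throughout, $\bv$ denotes the solution of \eqref{NSweak_regular_data} with $\tilde{\bg}=\bg_h$, which is the choice fixed in Section \ref{fem}. Assumptions \ref{assumption1} and \ref{assumption2} are then exactly what Proposition \ref{Robustness} and Proposition \ref{Efficiency} need. The smallness hypothesis on $\Rey$ (and implicitly on the data) also covers the hypothesis under which \eqref{error:u-v} holds. The term $\|\bg-\bg_h\|$ in the statement should be read as the $L^2(\Gamma)$ norm, since that is the norm appearing in \eqref{error:u-v}.

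\emph{Upper bound.} Write
\[
\|\bu-\bu_h\|_{L^4(\O)}\le \|\bu-\bv\|_{L^4(\O)}+\|\bv-\bu_h\|_{L^4(\O)} .
\]
The first term is bounded by $C\|\bg-\bg_h\|_{L^2(\Gamma)}$ using \eqref{error:u-v}. The second is bounded by $\eta_\O+h^2\|\bf-\bf_h\|_{L^4(\O)}$ using Proposition \ref{Robustness}. Adding the two gives the first inequality.

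\emph{Lower bound.} First raise the local efficiency estimate of Proposition \ref{Efficiency} to the fourth power and sum over $T\in\matT_h$. This uses $(a+b+c)^4\lesssim a^4+b^4+c^4$, so we need to control
\[
\sum_T\|\bv-\bu_h\|^4_{L^4(\omega_T)},\qquad \sum_T\|q-p_h\|^4_{W^{-1,4}(\omega_T)},\qquad \sum_T h_T^8\|\bf-\bf_h\|^4_{L^4(\omega_T)} .
\]
The main technical point is the finite overlap of the patches $\omega_T$. Shape regularity bounds the number of patches containing any given triangle, so the first and third sums are bounded by $\|\bv-\bu_h\|^4_{L^4(\O)}$ and $h^8\|\bf-\bf_h\|^4_{L^4(\O)}$.

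The pressure sum is the delicate step, because the $W^{-1,4}$ norm is a dual norm and is not additive over subdomains. I would take the local norms as restrictions of a global functional. The test functions used in the proofs of Lemmas \ref{Cota de R_T} and \ref{Cota_salto} are supported in $T$ or in $\omega_\ell$ and lie in $W^{1,4/3}_0$ of those sets. I would then use the superadditivity
\[
\sum_i\|F\|^{4}_{W^{-1,4}(D_i)}\lesssim\|F\|^4_{W^{-1,4}(\O)}
\]
for a family of subdomains $D_i$ with bounded overlap. This follows by duality: choose near-maximizers $\psi_i\in W^{1,4/3}_0(D_i)$, normalized so that $\|\psi_i\|^{4/3}_{W^{1,4/3}}=\|F\|^4_{W^{-1,4}(D_i)}$, and test $F$ against $\sum_i\psi_i$. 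Hölder's inequality with exponents $4$ and $4/3$, together with finite overlap, controls $\|\sum_i\psi_i\|_{W^{1,4/3}(\O)}$. If this argument proves too fiddly, the fallback is simply to define the global pressure term as the $\ell^4$-sum of the local ones.

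Taking fourth roots then gives
\[
\eta_\O\lesssim \|\bv-\bu_h\|_{L^4(\O)}+\|q-p_h\|_{W^{-1,4}(\O)}+h^2\|\bf-\bf_h\|_{L^4(\O)} .
\]
Finally, the triangle inequality $\|\bv-\bu_h\|_{L^4}\le\|\bu-\bu_h\|_{L^4}+\|\bu-\bv\|_{L^4}$, combined again with \eqref{error:u-v}, replaces $\bv$ by $\bu$ at the cost of the term $\|\bg-\bg_h\|_{L^2(\Gamma)}$. This yields the second inequality.
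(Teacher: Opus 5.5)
Your proposal is correct and follows the paper's route: the paper's proof simply combines Proposition~\ref{Robustness}, Proposition~\ref{Efficiency} and \eqref{error:u-v} via the triangle inequality through $\bv$, exactly as you do. Your additional details correctly fill in steps that the paper's one-line proof leaves implicit. These are: summing fourth powers of the local efficiency bounds using the finite overlap of the patches $\omega_T$; the duality argument giving $\sum_T\|q-p_h\|^4_{W^{-1,4}(\omega_T)}\lesssim\|q-p_h\|^4_{W^{-1,4}(\O)}$; and reading $\|\bg-\bg_h\|$ as the $L^2(\Gamma)$ norm.
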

\begin{proof}
It follows from Propositions \ref{Robustness}, \ref{Efficiency} and the apriori estimate \eqref{error:u-v}.
\qed
\end{proof}

\section{Numerical example}\label{sec:example}

In this section, we illustrate the performance of the proposed adaptive finite element method and the associated a posteriori error estimator for a benchmark problem with low-regularity Dirichlet boundary data. In particular, we consider the classical \emph{lid-driven cavity flow problem}, where the discontinuous behavior of the boundary velocity at the upper corners provides a challenging test case for our adaptive algorithms.

Let $\O=[-1, 1]\times [-1, 1]$, $\Rey = 0.1$, $\bf=(0,0),$ and

\[
\mathbf{g}(x,y)=
\begin{cases}
(0,0), & -1<x<1,\quad y=-1,\\[2mm]
(1,0), & y=1,\\[2mm]
(0,0), & x=\pm1,\quad -1<y<1.
\end{cases}
\]    

For the numerical approximation, we employ Taylor-Hood finite elements, as described in Section~\ref{sec:aposteriori}, and choose $\bg_h$ as the Lagrange interpolant. In this case, the compatibility condition is automatically satisfied. Otherwise, one may use the modification proposed in \cite{duran2020} or other regularization strategies such as those described in \cite{apel2026}.

We next discuss how the discrete solution is computed on each fixed mesh. The numerical solution of the Navier--Stokes problem is typically obtained through an iterative linearization procedure. In our numerical experiments, we employ a Newton scheme, whose convergence has been studied for both the homogeneous \cite{girault1986,karakashian1982} and non-homogeneous cases \cite{gunzburger1983,wang2016}. All numerical experiments were carried out using the finite element library \textit{NGSolve} \cite{ngsolve}.

Since we do not know the exact solution and that our numerical solution is the result of an iterative method, the $L^4(\O)$-error is calculated as the difference between the limits solutions obtained in two consecutive refinements. 
In fact, let us note as $\bu_{h_{j}}^{\star}$ and  $\bu_{h_{j+1}}^{\star}$ the limits of the sequence $\bu^n_h$ for a mesh $j$ and its refinement $j+1$, where $\bu_h^n$ is the sequence given by Newton's method. Then, we define successive errors as,
$$e_{L^4}(\bu)= \| \bu^{\star}_{h_{j}} - \bu^{\star}_{h_{j+1}} \|_{L^4(\O)}$$

As usual (see, for instance, \cite{verfurth1996}), the adaptive procedure consists of computing the numerical solution on the mesh $\matT_h$, marking the elements for refinement according to a given strategy, and constructing a new mesh $\matT'_h$, which is a refinement of $\matT_h$. We consider the \textit{average strategy} as a marketing strategy. Given a parameter $\theta\in(0,1]$, the average strategy marks all elements $T'\in\mathcal{T}_h$ such that
$$
\eta_{T'}
\ge
\theta
\frac{1}{\#\matT_h}\sum_{T\in\mathcal{T}_h}\eta_T.
$$

Table \ref{tab:uniform} reports the results obtained after five levels of uniform refinement and we compute the experimental order of convergence (eoc) with respect to the number of degrees of freedom (DOFs) and Figure \ref{fig:uniform_ref} show the refinement for the uniform scheme. Since $\text{DOFs} \sim h^{-2}$ for quasi-uniform meshes in two dimensions, a convergence rate of order $h^\alpha$ corresponds to a rate of order DOFs $-\alpha/2$. Therefore, the rates observed with respect to the number of degrees of freedom are expected to be half of those with respect to $h$. The numerical results indicate a convergence rate close to $-1/4$ with respect to the number of degrees of freedom. Equivalently, this yields a rate close to  $1/2$ with respect to the mesh size $h$,  which is consistent with the theoretical and numerical findings of \cite{ArMe} since cavity boundary data belongs to $H^{
1/2-\eps}(\Gamma)$ for every $\eps >0$.

\begin{table}
\centering
\caption{Uniform scheme.}
\label{tab:uniform}   
\begin{tabular}{lll}
\hline\noalign{\smallskip}
DOFs & $e_{L^4}$ & eoc  \\
\noalign{\smallskip}\hline\noalign{\smallskip}
288 &  2.3620e-01 & -- \\
1040 & 1.7331e-01 & -0.2411\\
3948 & 1.2262e-01 & -0.2505\\
15380 & 8.6658e-02 & -0.2528\\
60708 & 6.1273e-02 & -0.2532 \\
\noalign{\smallskip}\hline
\end{tabular}
\end{table}

\begin{figure}[!ht]
    \centering
    \begin{subfigure}{0.48\textwidth}
        \centering
        \includegraphics[width=1\linewidth]{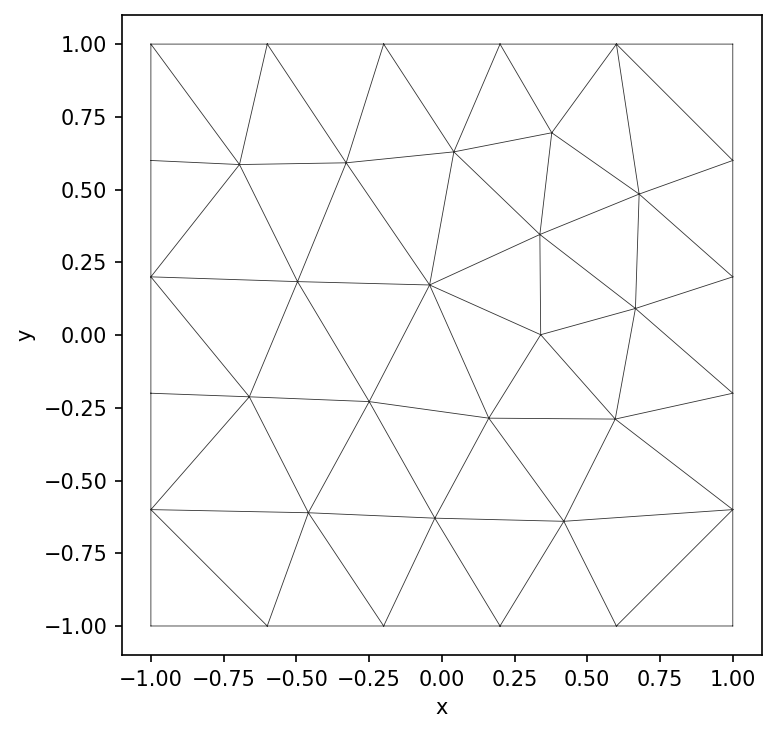}
        \label{fig:initial_mesh_uniform}
    \end{subfigure}
    \hfill
    \begin{subfigure}{0.48\textwidth}
        \centering
        \includegraphics[width=1\linewidth]{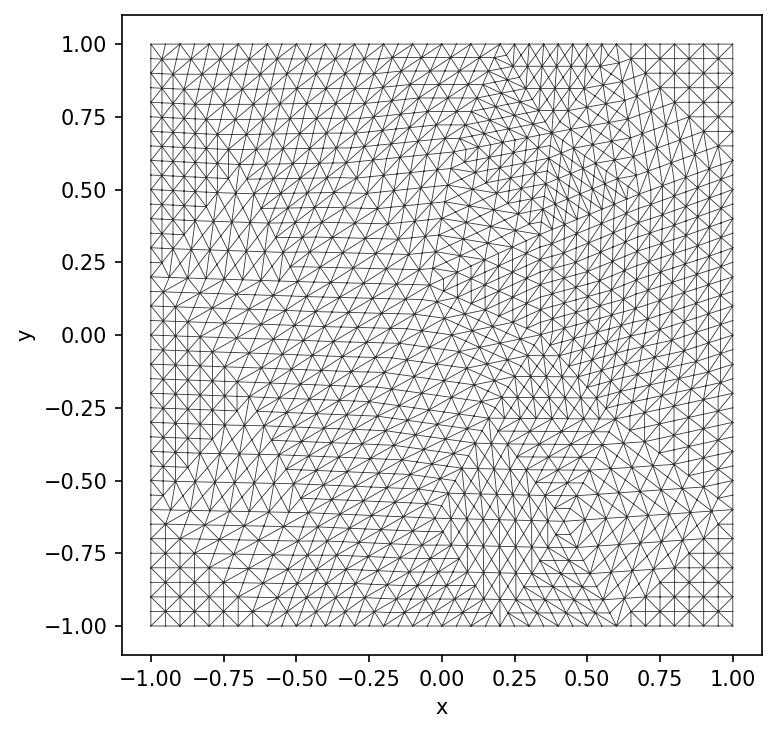}
        \label{fig:final_mesh_uniform}
    \end{subfigure}
    \caption{Meshes generated by the uniform scheme}
    \label{fig:uniform_ref}
\end{figure}

Table~\ref{tab:avg_strategy} exhibits the experimental order of convergence of both the error and the estimator with respect to the number of degrees of freedom and compute he efficiency index given by $$\text{eff} = \frac{\eta}{e_{L^4}}.$$ 
Figure \ref{fig:convergence} displays the log-log plot of the error and estimator versus the number of degrees of freedom, while Figure~\ref{fig:refinamiento_avg} shows that the mesh refinement is concentrated near the discontinuities of the boundary datum $\bg$. 

We observe that the proposed estimator not only concentrates the mesh refinement around the singularities, as expected, but also improves the convergence rate. Moreover, a smaller error is achieved with a significantly lower number of degrees of freedom than that required by uniform refinement.

\begin{table}
\centering
\caption{Adaptive scheme for the Hood-Taylor method using the average strategy and $\theta = 0.6$}
\label{tab:avg_strategy}   
\begin{tabular}{llllll}
\hline\noalign{\smallskip}
DOFs & $e_{L^4}$ & eoc ($e_{L^4}$) & $\eta$ & eoc ($\eta$) & eff \\
\noalign{\smallskip}\hline\noalign{\smallskip}
288 & 2.3610e-01 & -- & 8.3308e-01 & -- & 3.5285 \\
404 & 1.8456e-01 & -0.7276 & 8.3160e-01 & -0.0053 & 4.5057 \\
547 & 1.3313e-01 & -0.8900 & 5.4173e-01 & -0.6580 & 4.0693 \\
772 & 9.7867e-02 & -0.9093 & 3.8943e-01 & -0.8276 & 3.9792 \\
1134 & 6.8207e-02 & -0.9199 & 2.7312e-01 & -0.8824 & 4.0043 \\
1564 & 5.1723e-02 & -0.9146 & 1.9498e-01 & -0.9188 & 3.7697 \\
2289 & 3.8435e-02 & -0.8964 & 1.3661e-01 & -0.9337 & 3.5543 \\
2952 & 2.6779e-02 & -0.9209 & 9.6208e-02 & -0.9663 & 3.5927 \\
3772 & 2.1735e-02 & -0.9295 & 6.8428e-02 & -1.0015 & 3.1482 \\
5144 & 1.4988e-02 & -0.9475 & 4.8089e-02 & -1.0278 & 3.2085 \\
\noalign{\smallskip}\hline
\end{tabular}
\end{table}

\vspace{1cm}

\begin{figure}[!ht]
    \centering
        \includegraphics[width=0.57\linewidth]{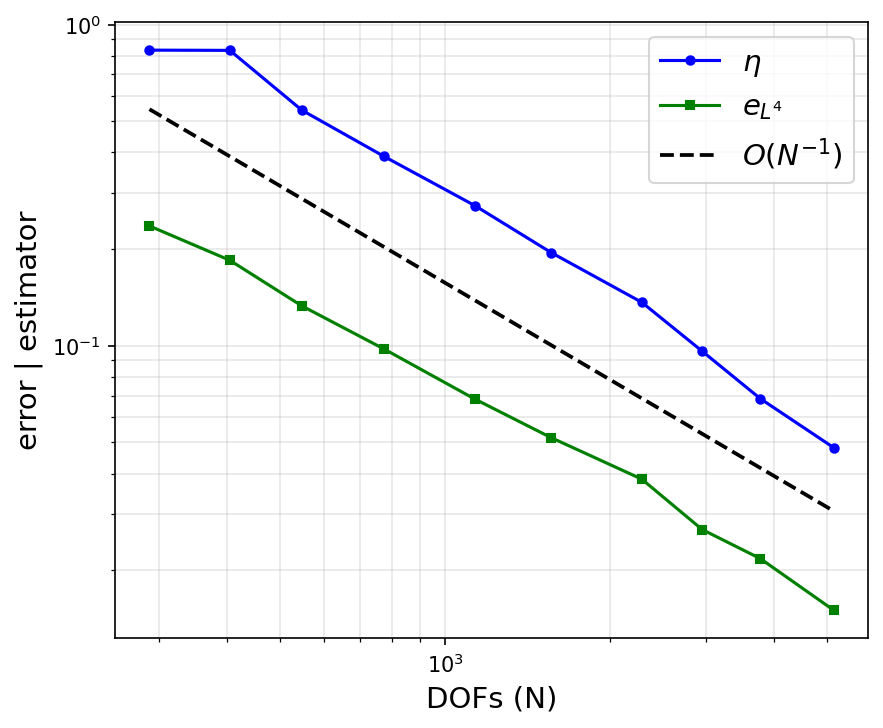}
           \caption{Log--log plot of the error and estimator versus the number of degrees of freedom }
    \label{fig:convergence}
\end{figure}

\begin{figure}[!ht]
    \centering
    \begin{subfigure}{0.475\textwidth}
        \centering
        \includegraphics[width=\linewidth]{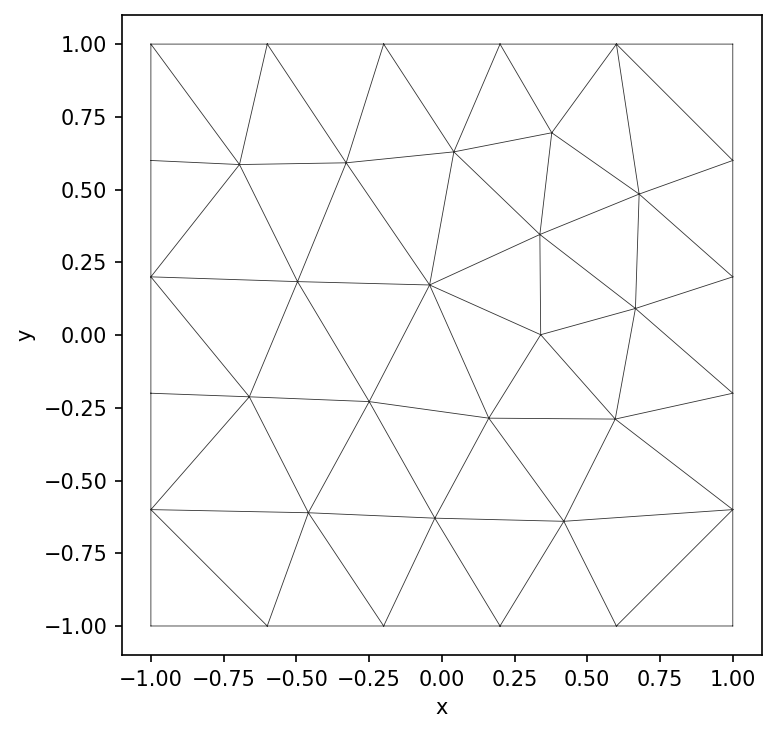}
        \label{fig:g1}
    \end{subfigure}
    \hfill
    \begin{subfigure}{0.475\textwidth}
        \centering
        \includegraphics[width=\linewidth]{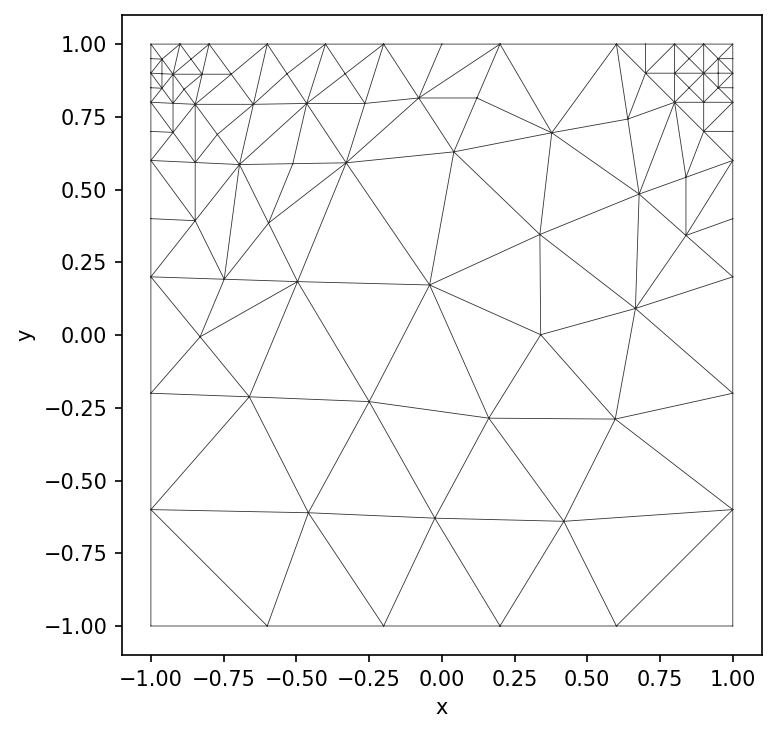}
        \label{fig:g2}
    \end{subfigure}

    \vspace{0.5cm}

    \begin{subfigure}{0.475\textwidth}
        \centering
        \includegraphics[width=\linewidth]{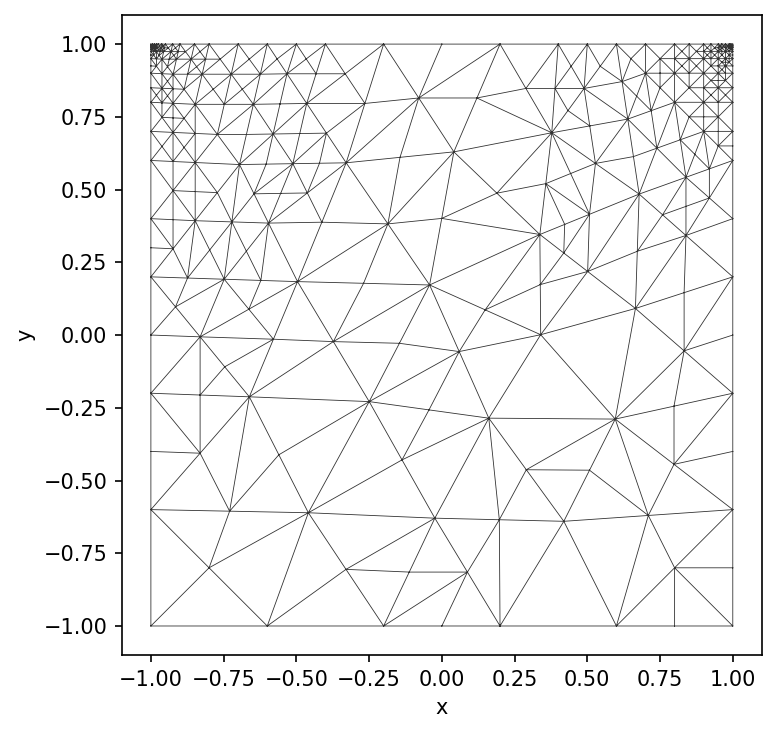}
        \label{fig:g3}
    \end{subfigure}
    \hfill
    \begin{subfigure}{0.475\textwidth}
        \centering
        \includegraphics[width=\linewidth]{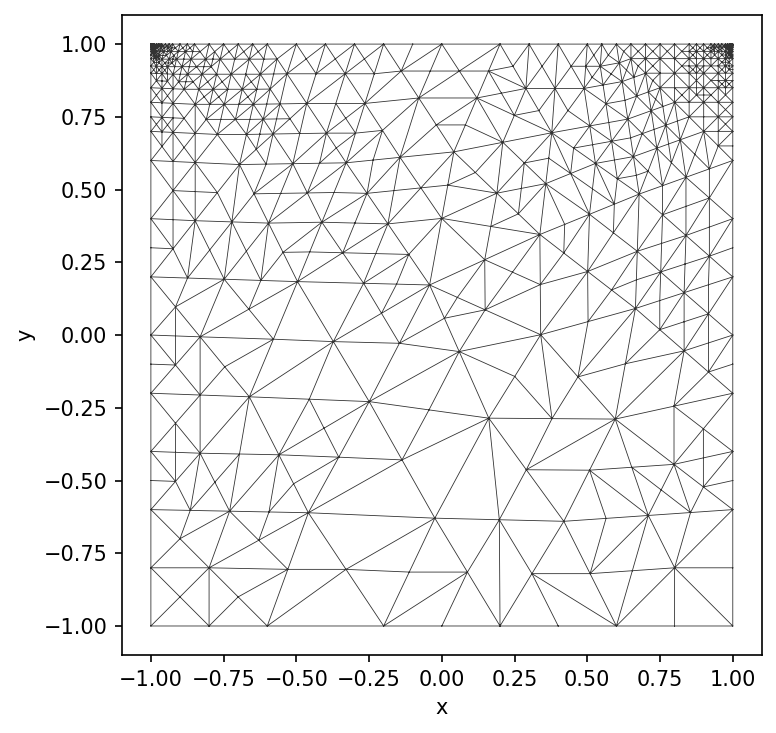}
        \label{fig:g4}
    \end{subfigure}

    \caption{Sequence of meshes generated by the adaptive procedure using the average strategy and the local error indicators $\eta_T$ with parameter $\theta=0.6$. The initial mesh and the meshes obtained after 3, 7, and 10 adaptive iterations are shown.}
    \label{fig:refinamiento_avg}
\end{figure}

\clearpage
\bibliographystyle{plain}
\bibliography{reference}
\end{document}

%% file: paquetes.tex
\usepackage{graphicx}
\usepackage{tikz}
\usetikzlibrary{cd,matrix,arrows,decorations.pathmorphing}
\usepackage{
  amsmath,
  amsthm,
  amssymb,
  euscript,
  enumerate, 
  url,
  verbatim,
  calc,
}
\usepackage{geometry}
\usepackage{algorithm}
\usepackage{algpseudocode}
\usepackage{subcaption}
\usepackage[colorinlistoftodos]{todonotes}
\usepackage{amsmath,amssymb}
\usepackage{xcolor}
\usepackage{authblk}
\usepackage{subcaption}

%% file: comandos.tex
\def\matR{\mathbb{R}}
\def\Rey{\mathrm{Re}}
\def\O{\Omega}
\def\bu{\textit{\textbf{u}}}

\def\bv{\textit{\textbf{v}}}
\def\bpsi{\boldsymbol{\Psi}}
\def\bz{\mathbf{z}}
\def\bw{\textit{\textbf{w}}}
\def\bf{\mathbf{f}}
\def\bg{\mathbf{g}}

\def\bxi{\mathbf{\xi}}
\def\bn{\mathbf{n}}

\def\bE{\mathbf{E}}
\def\bF{\mathbf{F}}
\def\bG{\mathbf{G}}
\def\bI{\mathbf{I}}

\def\bL{\mathbf{L}}
\def\bH{\mathbf{H}}
\def\bW{\mathbf{W}}
\def\bV{\mathbf{V}}

\def\bJ{\mathbf{J}}
\def\div{\textup{div} \ }

\def\matT{\mathcal{T}}
\def\matE{\mathcal{E}}

\def\matC{\mathcal{C}}

\def\eps{\varepsilon}

\theoremstyle{plain}
\newtheorem{theorem}{Theorem}[section]
\newtheorem{corollary}{Corollary}[section]
\newtheorem{lemma}[theorem]{Lemma}
\newtheorem{prop}[theorem]{Proposition}
\theoremstyle{definition}
\newtheorem{remark}[theorem]{Remark}

\newtheorem{assumption}{Assumption}